\newtheorem{theorem}{Theorem}
\numberwithin{theorem}{section}
\newtheorem{lemma}[theorem]{Lemma}
\newtheorem{proposition}[theorem]{Proposition}
\theoremstyle{definition}
\numberwithin{equation}{section}
\definecolor{refkey}{rgb}{0.9451,0.2706,0.4941}\definecolor{labelkey}{rgb}{0.9451,0.2706,0.4941}
\definecolor{darkred}{RGB}{139,0,0}
\definecolor{darkgreen}{RGB}{0,100,0}
\definecolor{darkmagenta}{RGB}{139,0,139}
\definecolor{gray}{RGB}{180,180,180}
\newcommand{\setu}{{\mathrm{\mathfrak{u}}}}
\newcommand{\bsx}{{\boldsymbol{x}}}
\newcommand{\bsy}{{\boldsymbol{y}}}
\newcommand{\bst}{{\boldsymbol{t}}}
\newcommand{\bsgamma}{{\boldsymbol{\gamma}}}
\newcommand{\bbN}{{\mathbb{N}}}
\newcommand{\bbR}{{\mathbb{R}}}
\newcommand{\bsalpha}{{\boldsymbol{\alpha}}}
\newcommand{\bsnu}{{\boldsymbol{\nu}}}
\newcommand{\bsmu}{{\boldsymbol{\mu}}}
\newcommand{\bsb}{{\boldsymbol{b}}}
\newcommand{\mask}[1]{{}}
\pgfplotsset{compat=1.18}
\title{Quasi-Monte Carlo for partial differential equations\\with generalized Gaussian input uncertainty}
\author{Philipp A.~Guth\footnotemark[2]\and Vesa Kaarnioja\footnotemark[3]}
\begin{document}\maketitle

\begin{abstract}
There has been a surge of interest in uncertainty quantification for parametric partial differential equations (PDEs) with Gevrey regular inputs. The Gevrey class contains functions that are infinitely smooth with a growth condition on the higher-order partial derivatives, but which are nonetheless not analytic in general. Recent studies by Chernov and L\^{e} (\emph{Comput.~Math.~Appl.}, 2024, and \emph{SIAM J.~Numer.~Anal.}, 2024) as well as Harbrecht, Schmidlin, and Schwab (\emph{Math.~Models Methods Appl.~Sci.}, 2024) analyze the setting wherein the input random field is assumed to be uniformly bounded with respect to the uncertain parameters. In this paper, we relax this assumption and allow for parameter-dependent bounds. The parametric inputs are modeled as generalized Gaussian random variables, and we analyze the application of quasi-Monte Carlo (QMC) integration to assess the PDE response statistics using randomly shifted rank-1 lattice rules. In addition to the QMC error analysis, we also consider the dimension truncation and finite element errors in this setting.
\end{abstract}

\footnotetext[2]{Johann Radon Institute for Computational and Applied Mathematics, Austrian Academy of Sciences, Altenbergerstra{\ss}e 69, AT-4040 Linz, Austria ({\tt philipp.guth@ricam.oeaw.ac.at}).}
\footnotetext[3]{School of Engineering Sciences, LUT University, P.O.~Box 20, FI-53851 Lappeenranta, Finland ({\tt vesa.kaarnioja@lut.fi}).}

\section{Introduction}

Uncertainty quantification for partial differential equations (PDEs) with random coefficients has become a crucial aspect of modeling and simulation across various scientific and engineering disciplines. One reason behind this success is the ability to integrate knowledge of governing physical equations while accommodating for randomness which may reflect, for instance, missing data, uncertainties in measurements, material heterogeneity, or external influences. A key focus in this area is the statistical characterization of the solution given the input statistics of the random field. The significant computational expense associated with these problems has spurred research into the development of efficient numerical algorithms, such as those based on sparse grids~\cite{gegr98,nori96,smolyak} or quasi-Monte Carlo (QMC) methods~\cite{dks13,Kuo2016ApplicationOQ,KuoNuyens2018}.

Let $D\subset\mathbb R^d$ be a nonempty, bounded Lipschitz domain, $(\Omega,\mathcal A,\mathbb P)$ a probability space, and $f\!:D\to\mathbb R$ a given source term. An important model problem in applied uncertainty quantification is the elliptic PDE
\begin{align}
\begin{split}
-\nabla\cdot(a(\bsx,\omega)\nabla u(\bsx,\omega))&=f(\bsx), \quad\quad\bsx\in D,\\
u(\bsx,\omega)&=0, \qquad\qquad\!\!\!\bsx\in\partial D
\end{split}\quad\text{for a.e.}~\omega\in\Omega,\label{eq:pdemodel}
\end{align}
where the diffusion coefficient $a\!:D\times \Omega\to \mathbb R$ is assumed to be a lognormal random field. If $\log(a(\bsx,\omega))$ is a Gaussian random field with continuous covariance, then there holds by the Karhunen--Lo\`eve theorem that%
\begin{align}\label{eq:lognormalref}%
a(\bsx,\omega)=a_0(\bsx)\exp\bigg(\sum_{j=1}^\infty \sqrt{\lambda_j}\psi_j(\bsx)y_j(\omega)\bigg)\quad\text{for a.e.}~\bsx\in D~\text{and}~\omega\in \Omega,
\end{align}
where $y_j\overset{\rm i.i.d.}{\sim}\mathcal N(0,1)$ and $(\lambda_j,\psi_j)\in\mathbb R_+\times L^2(D)$, $j\geq 1$, are the eigenpairs of the covariance operator. %
In turn, this allows to identify $a(\bsx,\omega)\equiv a(\bsx,\bsy)$ and $u(\bsx,\omega)\equiv u(\bsx,\bsy)$ as parametric functions of $\bsy=\bsy(\omega)$ and the response statistics of the problem~\eqref{eq:pdemodel} can be recast as infinite-dimensional parametric integrals: for example, the expected value of some quantity of interest $G(u(\cdot,\omega))$ is 
\begin{align*}
\mathbb E[G(u)]=\int_{\Omega}G(u(\cdot,\omega))\,\mathbb P({\rm d}\omega)=\int_{\mathbb R^{\mathbb N}}G(u(\cdot,\bsy))\bsmu_2({\rm d}\bsy),
\end{align*}
where $\bsmu_2=\bigotimes_{j=1}^\infty \mathcal N(0,1)$ denotes the infinite-dimensional product Gaussian measure. 

A popular method for the numerical approximation of the above problem is to represent the input random field as a finite, parametric sum
\begin{align*}
a_s(\bsx,\bsy)=a_0(\bsx)\exp\bigg(\sum_{j=1}^s \sqrt{\lambda_j}\psi_j(\bsx)y_j\bigg),\quad \bsx\in D~\text{and}~\bsy\in\mathbb R^s,
\end{align*}
and it remains to use cubature rules such as QMC methods or sparse grids to approximate high-dimensional integrals
\begin{align*}
I_s(G(u_s))=\int_{\mathbb R^s}G(u_s(\cdot,\bsy_{\leq s}))\bigg(\prod_{j=1}^s \frac{1}{\sqrt{2\pi}}{\rm e}^{-\frac12 y_j^2}\bigg)\,{\rm d}\bsy_{\leq s},
\end{align*}
where $\bsy_{\leq s}:=(y_j)_{j=1}^s$ and the dimensionally-truncated solution $u_s\!: D\times \mathbb R^s\to\mathbb R$ is the solution to the parametric PDE
\begin{align}\label{eq:parametricpde2}
\begin{split}
-\nabla\cdot(a_s(\bsx,\bsy)\nabla u_s(\bsx,\bsy))&=f(\bsx), \quad\quad\bsx\in D,\\
u_s(\bsx,\bsy)&=0, \qquad\quad\quad\!\!\!\bsx\in\partial D
\end{split}\quad\text{for a.e.}~\bsy\in\mathbb R^s.
\end{align}
This approximation incurs a dimension truncation error
\begin{align*}
|\mathbb E[G(u)]-\mathbb E[G(u_s)]|,
\end{align*}
which is controlled by the rate of decay of the eigenvalues $(\lambda_j)_{j\geq 1}$~\cite{matern1,gk24}.

The dimensionally-truncated integrals may still be high-dimensional. Sparse grid methods and QMC methods have proven effective in solving high-dimensional integration problems, particularly in the context of elliptic PDEs with uniform~\cite{dicklegiaschwab,spodpaper14,kss12,kssmultilevel} and lognormal~\cite{log,log2,log3,log4,log5} parameterizations of a random diffusion coefficient. 
There have been recent attempts in the literature to generalize the aforementioned model problem by replacing the probability measure corresponding to the input random field with a generalized $\beta$-Gaussian distribution. 
Specifically, Herrmann {\em et al.}~\cite{hks21} studied QMC integration for Bayesian inverse problems governed by parametric PDEs, where the probability measure was assumed to belong to the family of generalized $\beta$-Gaussian distributions, while Guth and Kaarnioja~\cite{gk24} investigated the dimension truncation error rates subject to high-dimensional integration problems with respect to these probability measures. %

Meanwhile, there has been a recent surge of interest in the study of parametric PDE problems subject to {\em Gevrey regular} input random fields~\cite{chernovle2,chernovle1,schmidlin24}: that is, the parametric coefficient $a_s(\bsx,\bsy)$ in~\eqref{eq:parametricpde2} is obtained as the dimension truncation of a parametric coefficient $a(\bsx,\bsy)$, which is assumed to satisfy, for {\em Gevrey parameter~$\sigma \ge 1$} and some sequence $(b_j)_{j\geq 1}\in\ell^1(\mathbb N)$ of nonnegative numbers,
\begin{align}\label{eq:gevrey0}
\bigg\|\bigg(\prod_{j\geq 1} \frac{\partial^{\nu_j}}{\partial y_j^{\nu_j}}\bigg)a(\cdot,\bsy)\bigg\|_{L^\infty(D)}\lesssim \bigg(\bigg(\sum_{j\geq 1} \nu_j\bigg)!\bigg)^\sigma \prod_{j\geq 1} b_j^{\nu_j}
\end{align}
for all $(\nu_j)_{j\ge1}\in\mathbb N_0^{\mathbb N}$ with $\sum_{j\geq 1}\nu_j<\infty$ and $\bsy=(y_1,y_2,\ldots)\in U$ in some set $\varnothing\neq U\subset \mathbb R^{\mathbb N}$. The papers~\cite{chernovle2,schmidlin24} show that the solutions of semilinear elliptic PDEs are Gevrey regular with respect to the uncertain, bounded parameters if the input random field is Gevrey regular, while~\cite{chernovle1} considered an isolated eigenpair of a linear elliptic PDE subject to the same input parameterization. Furthermore, since Gevrey regular functions are not necessarily complex-analytic, the development of QMC rules for the approximation of the response statistics to such models cannot make use of complex-analytic arguments such as those employed in~\cite{andreev12,bieri09,dglgs19,dicklegiaschwab,schwab_gittelson_2011,ZechPhD}.

A limitation of the aforementioned works~\cite{chernovle2,chernovle1,schmidlin24} is that they operate under the assumption that the parametric regularity bound~\eqref{eq:gevrey0} is uniform over $\bsy\in U$. In particular, this excludes lognormal parameterizations of the input random field $a$. Thus it is the goal of this paper to generalize these results for PDE problems subject to parameter-dependent upper bounds. Our theory also covers the case where the random variables appearing in the input random field are generalized $\beta$-Gaussian random variables. The present work thereby seeks to merge these two concurrent lines of contemporary research of QMC methods involving generalized Gevrey regular parameterizations for input random fields with unbounded support. %

This paper is organized as follows. Subsection~\ref{sec:notations} introduces the multi-index notation used throughout this paper. The modeling assumptions and problem setting are described in Section~\ref{sec:problem}. The main new parametric regularity results are derived in Section~\ref{sec:reg}. Section~\ref{sec:infinitedim} briefly discusses the notion of integrability required within our infinite-dimensional framework while the dimension truncation error analysis for the model problem is carried out in Section~\ref{sec:dimtrunc}. The application of QMC methods for the model problem is considered in Section~\ref{sec:qmcerror}. The QMC error bound is combined with the dimension truncation error rate and finite element error bound in Section~\ref{sec:totalerror}, while numerical experiments demonstrating the QMC error rate are presented in Section~\ref{sec:numex}. The paper ends with some conclusions on our results.

\subsection{Notations and preliminaries}\label{sec:notations}
Throughout this manuscript, boldfaced letters are used to denote multi-indices while the subscript notation $m_j$ is used to refer to the $j^{\rm th}$ component of multi-index $\boldsymbol m$. We denote the set of all finitely supported multi-indices by
\begin{align*}
\mathscr F := \{\boldsymbol m\in\mathbb N_0^{\mathbb N}:|{\rm supp}(\boldsymbol m)|<\infty\},
\end{align*}
where the support of a multi-index is defined as ${\rm supp}(\boldsymbol m):=\{j\in\mathbb N:m_j\neq 0\}$. Moreover, the modulus of a multi-index is defined as
\begin{align*}
|\boldsymbol m|:=\sum_{j\geq 1} m_j.
\end{align*}
Furthermore, for any sequence $\boldsymbol y:=(y_j)_{j=1}^\infty$ of real numbers and $\boldsymbol m,\bsnu\in\mathscr F$, we define the special notations
\begin{align*}
&\boldsymbol m\leq \bsnu\quad\text{if and only if}\quad m_j\leq \nu_j~\text{for all}~j\geq 1,\\
&\delta_{\bsnu,\boldsymbol{m}} = \begin{cases} 1 & \text{if}~\nu_j = m_j~\text{for all}~j\ge 1,\\ 0 & \text{otherwise} ,
\end{cases}\\
&\binom{\bsnu}{\boldsymbol m}:=\prod_{j\geq 1}\binom{\nu_j}{m_j},\quad \boldsymbol m!:=\prod_{j\geq 1}m_j!,\\
&\partial^{\boldsymbol m}:=\prod_{j\geq 1}\frac{\partial^{m_j}}{\partial y_j^{m_j}},\quad \boldsymbol y^{\boldsymbol m}:=\prod_{j\geq 1} y_j^{m_j},
\end{align*}
where we use the convention $0^0:=1$.

We will also use the special notation $\{1:s\}:=\{1,\ldots,s\}$ for $s\in\mathbb N$. Given a set~$\setu \subseteq \{1:s\}$, we write~$-\setu := \{1:s\}\setminus \setu$ and denote by~$\bsy_{\setu}$ the projection of~$\bsy$ onto its components with indices~$j \in \setu$.

\section{Problem setting}\label{sec:problem}
Let $D\subset\mathbb R^d$ be a nonempty, bounded Lipschitz domain with $d\in\{1,2,3\}$. By~$H_0^1(D)$ we denote the subspace of~$H^1(D)$ with zero trace on $\partial D$. We equip $H_0^1(D)$ with the norm~$\|v\|_{H_0^1(D)} = \|\nabla v\|_{L^2(D)}$ and the inner product~$\langle v_1, v_2\rangle_{H_0^1(D)} = \langle \nabla v_1, \nabla v_2 \rangle_{L^2(D)}$. The dual space of~$H_0^1(D)$ is denoted by~$H^{-1}(D)$, $L^2(D)$ is identified with its own dual, and we denote the duality pairing between $H^{-1}(D)$ and $H_0^1(D)$ by $\langle\cdot,\cdot\rangle_{H^{-1}(D),H_0^1(D)}$. Moreover, we denote by $H^t(D)$, $t\in(-1,1)$, the interpolation spaces between $H^{-1}(D)$ and $H^1(D)$, with the convention that $H^0(D)=L^2(D)$. Furthermore, we denote by $C^{0,t}(\overline{D})$ the space of H\"older continuous functions on $\overline{D}$ with exponent $0<t\leq 1$.

Given a functional $f\in H^{-1}(D)$, we consider the model problem~\eqref{eq:pdemodel} in its weak formulation: for all $\bsy\in \bbR^\bbN$, find $u(\cdot,\bsy)\in H_0^1(D)$ such that
\begin{align}\label{eq:weak-rand}
\int_D a(\bsx,\bsy)\nabla u(\bsx,\bsy)\cdot \nabla v(\bsx)\,{\rm d}\bsx=\langle f,v\rangle_{H^{-1}(D),H_0^1(D)}\quad\text{for all}~v\in H_0^1(D).
\end{align}
We shall then be interested in integrals involving the solution of~\eqref{eq:weak-rand}, such as
\begin{align*}
    \int_{\bbR^\bbN} u(\cdot,\bsy) \bsmu_\beta(\mathrm d\bsy) \in H_0^1(D) \quad \text{or} \quad \int_{\bbR^\bbN} G(u(\cdot,\bsy)) \bsmu_\beta(\mathrm d\bsy) \in \bbR.
\end{align*}
Here and in the following,~$G \in H^{-1}(D)$ denotes a~$\bsy$-independent, bounded and linear functional on~$H_0^1(D)$, and~$\bsmu_\beta$ is the generalized~$\beta$-Gaussian distribution defined as
\begin{align}
    \bsmu_\beta := \bigotimes_{j\geq1} \mathcal{N}_\beta(0,1),\label{eq:gengauss}
\end{align}
where $\mathcal{N}_\beta(0,1)$ denotes the univariate $\beta$-Gaussian distribution with density 
\begin{align}
    \varphi_{\beta}(y) := \frac{1}{2\beta^{\frac{1}{\beta}}\Gamma(1+\frac{1}{\beta})} {\rm e}^{-\frac{|y|^\beta}{\beta}}, \quad y\in \bbR,\label{eq:betadef}
\end{align}
where $\beta >0$. Importantly, in the case $\beta=2$ the probability measure~\eqref{eq:gengauss} is Gaussian and in the case $\beta=1$ it corresponds to the Laplace distribution. Formally, the case $\beta=\infty$ corresponds to the uniform probability measure on $[-1,1]^{\mathbb N}$.%

Given~$0< \tau \leq \beta$, and an arbitrary sequence~$\bsalpha:=(\alpha_j)_{j\geq 1}\in\ell^1(\mathbb N)$, such that~$\alpha_j\in[0,\infty)$ for all $j\in\mathbb N$, we define the set
\begin{align*}
    U_{\bsalpha,\tau} := \bigg\{ \bsy \in \bbR^{\bbN}: \sum_{j\geq 1} \alpha_j |y_j|^{\tau} <\infty\bigg\}.
\end{align*}
In Lemma~\ref{lem:domain_change} it is shown that~$\bsmu_\beta({U_{\bsalpha,\tau}}) = 1$. Thus, in the $\beta$-Gaussian setting, the domain of integration $\mathbb R^{\mathbb N}$ is interchangeable with $U_{\bsalpha,\tau}$, and we restrict ourselves to
\begin{align*}
    &I(u) := \int_{U_{\bsalpha,\tau}} u(\bsy) \bsmu_\beta(\mathrm d\bsy) \in H_0^1(D)\\
    &\text{or}\quad G(I(u)) = I(G(u)) = \int_{U_{\bsalpha,\tau}} G(u(\bsy)) \bsmu_\beta(\mathrm d\bsy) \in \bbR.
\end{align*}

The numerical evaluation of these integrals requires different types of approximations. First, the infinite-dimensional integrals~$I$ are truncated to $s$-dimensional ones~$I_s$. To this end, we denote the {\em dimensionally-truncated} diffusion coefficient by
\begin{align*}a_s(\cdot,\bsy):=a(\cdot,(y_1,\ldots,y_s,0,0,\ldots))\quad \text{for $\bsy\in\mathbb R^{\mathbb N}$}\end{align*} and the {\em dimensionally-truncated} PDE solution by \begin{align*}u_s(\cdot,\bsy):=u(\cdot,(y_1,\ldots,y_s,0,0,\ldots))\quad \text{for $\bsy\in\mathbb R^{\mathbb N}$.}\end{align*} Then, an~$n$-point cubature rule~$Q_{s,n}$, e.g., a QMC rule, can be applied to approximate the~$s$-dimensional integral over a spatial discretization~$u_h$ of the PDE solution~$u$. The total error can be decomposed as
\begin{align}\label{eq:overallerror}\begin{split}
&\|I(u)- Q_{s,n}(u_h)\|_{H_0^1(D)}\\
&\le \|(I-I_s)(u)\|_{H_0^1(D)} + \|I_s(u-u_h)\|_{H_0^1(D)} + \|(I_s- Q_{s,n})(u_h)\|_{H_0^1(D)}.
\end{split}
\end{align}
In order to control these different error contributions and to derive rigorous convergence rates we will work under the following assumptions.

\begin{enumerate}[label=(A\arabic*)]
\item Let $\boldsymbol b=(b_j)_{j\geq 1}$ be a sequence of nonnegative real numbers. We assume that the function $a\!:D\times U_{\bsalpha,\tau}\to \mathbb R$ satisfies for some $\sigma\geq 1$ that
\begin{align*}
\bigg\|\frac{\partial^{\bsnu}a(\cdot,\bsy)}{a(\cdot,\bsy)}\bigg\|_{L^\infty(D)} &\leq C(|\bsnu|!)^\sigma \boldsymbol b^{\bsnu}
\end{align*}
for all $\bsy\in U_{\bsalpha,\tau}$ and $\bsnu\in\mathscr F$, where~$C>0$ is a constant independent of $\bsy$ and $\bsnu$.\label{assump2}
\item We assume that there holds $\|a(\cdot,\bsy)-a_s(\cdot,\bsy)\|_{L^\infty(D)}\xrightarrow{s\to\infty}0$%
~for all $\bsy\in U_{\bsalpha,\tau}$.\label{assumption:prestrang}
\item The sequence~$\bsb = (b_j)_{j\ge1}$ is~$p$-summable, i.e., there exists $p\in(0,1)$ such that $\bsb\in\ell^p(\bbN)$.\label{assump3}
\item The sequence~$\bsb = (b_j)_{j\ge1}$ is monotonically decreasing, i.e., $b_1\geq b_2\geq \cdots$.\label{assump4}
\item There holds
\begin{align*}
a(\bsx,\bsy)\geq a_{\min}(\bsy) :=  c \exp\bigg(-\sum_{j=1}^\infty \alpha_j|y_j|^{\tau}\bigg)
\end{align*}
for all $\bsx\in D$ and $\bsy\in U_{\bsalpha,\tau}$, where~$c>0$,~$0<\tau \le\beta$, and~$\bsalpha \in \ell^1(\bbN)$. If $\tau=\beta$, then we additionally require $\|\bsalpha\|_\infty:=\sup_{j\geq 1}|\alpha_j|<\beta^{-1}$.\label{assump5}
\item Let~$a \in L_{\bsmu_\beta}^q(U_{\bsalpha,\tau},C^{0,t}(\overline{D}))$ and~$f \in %
H^{t-1}(D)$, for some~$0<t\le 1$ and for all~$q \in (0,\infty)$.\label{assump7}%
\item The spatial domain $D$ is a convex polyhedron.\label{assump8}
\end{enumerate}
{\em Remarks:}
\begin{itemize}
\item[(i)] The Assumptions~\ref{assump2}--\ref{assump7} are satisfied, in particular, by the lognormally parameterized coefficient~\eqref{eq:lognormalref} discussed in the introduction with $\beta=2$ and $\sigma=1$ (see, e.g.,~\cite{log,Kuo2016ApplicationOQ}). However, these assumptions cover a more general setting.
\item[(ii)]Assumption~\ref{assump7} implies that
\begin{align*}
    a_{\rm max}(\bsy) :=& \|a(\bsy)\|_{C^{0,t}(\overline{D})} \\
    =&\,\|a(\cdot,\bsy)\|_{L^\infty(\overline{D})}+\sup_{{\bsx,\bsx'\in \overline{D},~\bsx\neq\bsx'}}\frac{|a(\bsx,\bsy)-a(\bsx',\bsy)|}{\|\bsx-\bsx'\|^t}\in L_{\bsmu_\beta}^q(U_{\bsalpha,\tau})
\end{align*}
for all $q\in (0,\infty)$.
\item[(iii)]Assumption~\ref{assumption:prestrang} together with Strang's second lemma (see, e.g.,~\cite{matern1}) implies that
\begin{align*}
\|u(\cdot,\bsy)-u_s(\cdot,\bsy)\|_{H_0^1(D)}\xrightarrow{s\to\infty}0\quad\text{for all}~\bsy\in U_{\bsalpha,\tau}.
\end{align*}
\end{itemize}

\section{Regularity analysis of the model problem}\label{sec:reg}

In this section, we derive a parametric regularity bound for the PDE solution under the assumptions%
~\ref{assump2} and~\ref{assump5}. In Subsection~\ref{sec:abstract}, we prove an abstract multivariate recurrence bound. This abstract result will be used to derive the main regularity bound for the PDE solution in Subsection~\ref{sec:mainreg}.

\subsection{Abstract multivariate recurrence bound}\label{sec:abstract}

\begin{lemma}\label{lemma:easy}
Let $(\Upsilon_{\bsnu})_{\bsnu\in\mathscr F}$ and $\bsb=(b_j)_{j\geq 1}$ be sequences of nonnegative real numbers satisfying
\begin{align}\label{eq:upsilon}
\Upsilon_{\mathbf 0}\leq C_0\quad\text{and}\quad \Upsilon_{\bsnu}\leq C\sum_{\substack{\boldsymbol m\leq \bsnu\\ \boldsymbol m\neq \mathbf 0}}\binom{\bsnu}{\boldsymbol m}(|\boldsymbol m|!)^\sigma \boldsymbol b^{\boldsymbol m}\Upsilon_{\bsnu-\boldsymbol m}\quad\text{for all}~\bsnu\in\mathscr F\setminus\{\mathbf 0\},
\end{align}
where $C_0,C>0$ and $\sigma\geq 1$. 
Then there holds
\begin{align}\label{eq:upsilon2}
\Upsilon_{\bsnu}\leq C_0a_{|\bsnu|}(|\bsnu|!)^\sigma \boldsymbol b^{\bsnu},
\end{align}
where
\begin{align*}
a_k=C^{1-\delta_{k,0}}(C+1)^{\max\{k-1,0\}}.
\end{align*}
In the special case $\sigma=1$ this result is sharp in the sense that if equality holds in~\eqref{eq:upsilon}, then~\eqref{eq:upsilon2} also holds with equality.
\end{lemma}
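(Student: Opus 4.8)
The plan is to establish~\eqref{eq:upsilon2} by induction on $n:=|\bsnu|$, with the sequence $a_k$ essentially forced by the requirement that the induction close with equality when $\sigma=1$. The base case $n=0$ is trivial: $a_0=C^{0}(C+1)^{0}=1$, so $\Upsilon_{\mathbf 0}\le C_0=C_0\,a_0\,(0!)^\sigma\boldsymbol b^{\mathbf 0}$.

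For the inductive step, fix $\bsnu$ with $n=|\bsnu|\ge 1$ and assume~\eqref{eq:upsilon2} for all multi-indices of modulus $<n$. Substituting this into~\eqref{eq:upsilon} and using $\boldsymbol b^{\boldsymbol m}\boldsymbol b^{\bsnu-\boldsymbol m}=\boldsymbol b^{\bsnu}$ gives
\begin{align*}
\Upsilon_{\bsnu}\le C C_0\,\boldsymbol b^{\bsnu}\sum_{\substack{\boldsymbol m\le\bsnu\\\boldsymbol m\neq\mathbf 0}}\binom{\bsnu}{\boldsymbol m}(|\boldsymbol m|!)^\sigma(|\bsnu-\boldsymbol m|!)^\sigma a_{|\bsnu-\boldsymbol m|}.
\end{align*}
The crucial step is to collapse this multi-index sum to a univariate one by grouping the terms according to $k=|\boldsymbol m|$: since $\binom{\nu_j}{m_j}$ vanishes unless $m_j\le\nu_j$, comparing the coefficients of $x^k$ in $\prod_{j\ge1}(1+x)^{\nu_j}=(1+x)^n$ yields the Vandermonde-type identity $\sum_{|\boldsymbol m|=k}\binom{\bsnu}{\boldsymbol m}=\binom{n}{k}$, so the sum above equals $\sum_{k=1}^n\binom{n}{k}(k!)^\sigma((n-k)!)^\sigma a_{n-k}$.

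From here the remainder is a short computation. Because $\sigma\ge 1$ and $\binom{n}{k}\ge 1$, I would write $\binom{n}{k}(k!)^\sigma((n-k)!)^\sigma=(n!)^\sigma\binom{n}{k}^{1-\sigma}\le(n!)^\sigma$, reducing matters to the inequality $C\sum_{k=1}^n a_{n-k}\le a_n$, i.e.\ $C\sum_{j=0}^{n-1}a_j\le a_n$. A geometric-series evaluation gives $\sum_{j=0}^{n-1}a_j=1+C\sum_{j=1}^{n-1}(C+1)^{j-1}=(C+1)^{n-1}$, hence $C\sum_{j=0}^{n-1}a_j=C(C+1)^{n-1}=a_n$, which closes the induction. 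For the sharpness assertion, note that when $\sigma=1$ the bound $\binom{n}{k}^{1-\sigma}\le 1$ is an equality and $C\sum_{j=0}^{n-1}a_j=a_n$ holds exactly; thus if~\eqref{eq:upsilon} holds with equality and $\Upsilon_{\mathbf 0}=C_0$, the very same induction produces equality in~\eqref{eq:upsilon2}.

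I do not anticipate a genuine obstacle: this is a routine induction, and the only things to keep straight are the exclusion of $\boldsymbol m=\mathbf 0$ (which matches the univariate sum starting at $k=1$), the binomial identity $\sum_{|\boldsymbol m|=k}\binom{\bsnu}{\boldsymbol m}=\binom{|\bsnu|}{k}$, and the edge case $n=1$, where $\sum_{j=0}^{0}a_j=a_0=1=(C+1)^0$ so that $a_1=C$ as required.
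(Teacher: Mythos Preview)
Your proof is correct and follows essentially the same approach as the paper: induction on $|\bsnu|$, the Vandermonde identity $\sum_{|\boldsymbol m|=k}\binom{\bsnu}{\boldsymbol m}=\binom{|\bsnu|}{k}$, the inequality $(k!(n-k)!)^{\sigma-1}\le (n!)^{\sigma-1}$, and a geometric-series evaluation. Your execution is somewhat more streamlined than the paper's, which separates the boundary term $\boldsymbol m=\bsnu$ and then adds and subtracts to restore a full sum before applying Vandermonde, whereas you keep the coefficients $a_{n-k}$ intact and reduce directly to the scalar identity $C\sum_{j=0}^{n-1}a_j=a_n$.
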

\proof We prove this claim by induction with respect to the order of the multi-index $\bsnu\in\mathscr F$. The basis of the induction $\bsnu=\mathbf 0$ follows immediately from the assumptions. We let $\bsnu\in\mathscr F\setminus\{\mathbf 0\}$ and assume that the claim has been proved for all multi-indices with order less than $|\bsnu|$. Then
\begin{align*}
\Upsilon_{\bsnu}&\leq C_0C\!\sum_{\substack{\boldsymbol m\leq \bsnu\\ \boldsymbol m\neq\mathbf 0}}\!\binom{\bsnu}{\boldsymbol m}\!(|\boldsymbol m|!)^\sigma \boldsymbol b^{\boldsymbol m}C^{1-\delta_{|\bsnu|\!-\!|\boldsymbol m|,0}}(C+1)^{\max\{|\bsnu|\!-\!|\boldsymbol m|\!-\!1,0\}}((|\boldsymbol\nu|\!-\!|\boldsymbol m|)!)^\sigma \boldsymbol b^{\bsnu-\boldsymbol m}\\
&=C_0C(|\bsnu|!)^\sigma \boldsymbol b^{\bsnu}+C_0C\boldsymbol b^{\bsnu}\sum_{\substack{\boldsymbol m\leq \bsnu\\ \boldsymbol m\neq \mathbf 0\\ \boldsymbol m\neq \bsnu}}\binom{\bsnu}{\boldsymbol m}C(C+1)^{|\bsnu|-|\boldsymbol m|-1}(|\boldsymbol m|!(|\bsnu|-|\boldsymbol m|)!)^\sigma\\
&=C_0C(|\bsnu|!)^\sigma \boldsymbol b^{\bsnu}-C_0\boldsymbol b^{\bsnu}C^2(C+1)^{|\bsnu|-1}(|\bsnu|!)^\sigma -C_0\boldsymbol b^{\bsnu}C^2(C+1)^{-1}(|\bsnu|!)^\sigma\\
&\quad +C_0\boldsymbol b^{\bsnu}\sum_{\boldsymbol m\leq \bsnu}\binom{\bsnu}{\boldsymbol m}C^2(C+1)^{|\bsnu|-|\boldsymbol m|-1}(|\boldsymbol m|!(|\bsnu|-|\boldsymbol m|)!)^\sigma\\
&=C_0C(|\bsnu|!)^\sigma \boldsymbol b^{\bsnu}-C_0\boldsymbol b^{\bsnu}C^2(C+1)^{|\bsnu|-1}(|\bsnu|!)^\sigma-C_0\boldsymbol b^{\bsnu}C^2(C+1)^{-1}(|\bsnu|!)^\sigma\\
&\quad +C_0C^2(C+1)^{|\bsnu|-1}\boldsymbol b^{\bsnu}\sum_{\ell=0}^{|\bsnu|}(C+1)^{-\ell}(\ell!(|\bsnu|-\ell)!)^\sigma\sum_{\substack{\boldsymbol m\leq \bsnu\\ |\boldsymbol m|=\ell}}\binom{\bsnu}{\boldsymbol m}\\
&=C_0C(|\bsnu|!)^\sigma \boldsymbol b^{\bsnu}-C_0\boldsymbol b^{\bsnu}C^2(C+1)^{|\bsnu|-1}(|\bsnu|!)^\sigma -C_0\boldsymbol b^{\bsnu}C^2(C+1)^{-1}(|\bsnu|!)^\sigma\\
&\quad +C_0C^2(C+1)^{|\bsnu|-1}\boldsymbol b^{\bsnu}\sum_{\ell=0}^{|\bsnu|}(C+1)^{-\ell}(\ell!(|\bsnu|-\ell)!)^\sigma\frac{|\bsnu|!}{\ell!(|\bsnu|-\ell)!},
\intertext{where we used the Vandermonde convolution identity $\sum_{\boldsymbol m\leq\bsnu,~|\boldsymbol m|=\ell}\binom{\bsnu}{\boldsymbol m}=\binom{|\bsnu|}{\ell}=\frac{|\bsnu|!}{(|\bsnu|-\ell)!\ell!}$ (see, e.g.,~\cite[Equation~(5.1)]{gouldbook}). Now it follows that}
\Upsilon_{\bsnu}&\leq C_0C(|\bsnu|!)^\sigma \boldsymbol b^{\bsnu}-C_0\boldsymbol b^{\bsnu}C^2(C+1)^{|\bsnu|-1}(|\bsnu|!)^\sigma -C_0\boldsymbol b^{\bsnu}C^2(C+1)^{-1}(|\bsnu|!)^\sigma\\
&\quad +C_0C^2(C+1)^{|\bsnu|-1}\boldsymbol b^{\bsnu}(|\bsnu|!)^\sigma\sum_{\ell=0}^{|\bsnu|}(C+1)^{-\ell}\\
&=C_0C(|\bsnu|!)^\sigma \boldsymbol b^{\bsnu}-C_0\boldsymbol b^{\bsnu}C^2(C+1)^{|\bsnu|-1}(|\bsnu|!)^\sigma-C_0\boldsymbol b^{\bsnu}C^2(C+1)^{-1}(|\bsnu|!)^\sigma\\
&\quad +C_0C(C+1)^{|\bsnu|-1}\boldsymbol b^{\bsnu}(|\bsnu|!)^\sigma(1+C-(C+1)^{-|\bsnu|})\\
&=(|\bsnu|!)^\sigma\boldsymbol b^{\bsnu}\big(C_0C-C_0C^2(C+1)^{|\bsnu|-1}-C_0C^2(C+1)^{-1}\\
&\quad\quad\quad\quad\quad\quad+C_0C(C+1)^{|\bsnu|-1}(1+C-(C+1)^{-|\bsnu|})\big)\\
&=(|\bsnu|!)^\sigma\boldsymbol b^{\bsnu} C_0C (C+1)^{|\bsnu|-1},
\end{align*}
where we used the fact that $(\ell!(|\bsnu|-\ell)!)^{\sigma-1}\leq (|\bsnu|!)^{\sigma-1}$ for all $\sigma\geq 1$.\quad\endproof

\subsection{Parametric regularity bound for the PDE solution}\label{sec:mainreg}

\begin{theorem}\label{thm:weak-rand}
    Under the assumptions%
    ~{\rm \ref{assump2}} and~{\rm \ref{assump5}} the solution of~\eqref{eq:weak-rand} is Gevrey regular with the same parameter~$\sigma\geq 1$, i.e., for all~$\bsnu \in \mathscr F\setminus\{\mathbf 0\}$ and $\bsy\in U_{\bsalpha,\tau}$, there holds 
        \begin{align}\label{eq:reg2}
        \| \partial^{\bsnu}u(\cdot,\bsy)\|_{H_0^1(D)}%
        \leq \frac{\|f\|_{H^{-1}(D)}}{a_{\min}(\bsy)}C(C+1)^{|\bsnu|-1}(|\bsnu|!)^\sigma\boldsymbol b^{\bsnu},
    \end{align}
    where $C>0$ is the constant in~{\rm \ref{assump2}}.
\end{theorem}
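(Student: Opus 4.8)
The plan is to differentiate the weak formulation~\eqref{eq:weak-rand} with respect to the parametric variables using the Leibniz rule and then reduce the resulting recursion to the abstract bound of Lemma~\ref{lemma:easy}. First I would fix $\bsy\in U_{\bsalpha,\tau}$ and apply $\partial^{\bsnu}$ to the identity $\int_D a(\bsx,\bsy)\nabla u(\bsx,\bsy)\cdot\nabla v(\bsx)\,\rd\bsx = \langle f,v\rangle$ for $v\in H_0^1(D)$. Since the right-hand side is $\bsy$-independent, for $\bsnu\neq\mathbf 0$ the Leibniz product rule gives
\begin{align*}
\sum_{\bsm\le\bsnu}\binom{\bsnu}{\bsm}\int_D (\partial^{\bsm}a(\cdot,\bsy))\,\nabla(\partial^{\bsnu-\bsm}u(\cdot,\bsy))\cdot\nabla v\,\rd\bsx = 0,
\end{align*}
and isolating the $\bsm=\mathbf 0$ term and testing with $v=\partial^{\bsnu}u(\cdot,\bsy)$ yields
\begin{align*}
\int_D a(\cdot,\bsy)|\nabla\partial^{\bsnu}u(\cdot,\bsy)|^2\,\rd\bsx = -\sum_{\substack{\bsm\le\bsnu\\ \bsm\neq\mathbf 0}}\binom{\bsnu}{\bsm}\int_D(\partial^{\bsm}a(\cdot,\bsy))\,\nabla(\partial^{\bsnu-\bsm}u(\cdot,\bsy))\cdot\nabla\partial^{\bsnu}u(\cdot,\bsy)\,\rd\bsx.
\end{align*}

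Next I would estimate: the left-hand side is bounded below by $a_{\min}(\bsy)\|\partial^{\bsnu}u(\cdot,\bsy)\|_{H_0^1(D)}^2$ using Assumption~\ref{assump5}, while on the right I would write $\partial^{\bsm}a = \frac{\partial^{\bsm}a}{a}\cdot a$, pull out $\|\partial^{\bsm}a(\cdot,\bsy)/a(\cdot,\bsy)\|_{L^\infty(D)}\le C(|\bsm|!)^\sigma\bsb^{\bsm}$ from Assumption~\ref{assump2}, and bound the remaining integral $\int_D a(\cdot,\bsy)\,\nabla(\partial^{\bsnu-\bsm}u)\cdot\nabla\partial^{\bsnu}u\,\rd\bsx$ by Cauchy--Schwarz with the weight $a(\cdot,\bsy)$; this produces a factor $\big(\int_D a|\nabla\partial^{\bsnu-\bsm}u|^2\big)^{1/2}\big(\int_D a|\nabla\partial^{\bsnu}u|^2\big)^{1/2}$. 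After dividing through by $\big(\int_D a|\nabla\partial^{\bsnu}u|^2\big)^{1/2}$ and using $a_{\min}(\bsy)\le a(\bsx,\bsy)$ once more (together with $a_{\max}$-type bounds only where needed — in fact the weighted estimate is cleanest), one arrives at a recursion of the form
\begin{align*}
\Upsilon_{\bsnu} \le C\sum_{\substack{\bsm\le\bsnu\\ \bsm\neq\mathbf 0}}\binom{\bsnu}{\bsm}(|\bsm|!)^\sigma\bsb^{\bsm}\,\Upsilon_{\bsnu-\bsm},
\end{align*}
where $\Upsilon_{\bsnu}$ is a rescaled version of $\big(\int_D a(\cdot,\bsy)|\nabla\partial^{\bsnu}u(\cdot,\bsy)|^2\,\rd\bsx\big)^{1/2}$ (or of $\|\partial^{\bsnu}u(\cdot,\bsy)\|_{H_0^1(D)}$ up to $a_{\min},a_{\max}$ factors), with base case $\Upsilon_{\mathbf 0}\le \|f\|_{H^{-1}(D)}/a_{\min}(\bsy)$ coming from Lax--Milgram applied to~\eqref{eq:weak-rand} itself.

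Finally I would invoke Lemma~\ref{lemma:easy} with $C_0 = \|f\|_{H^{-1}(D)}/a_{\min}(\bsy)$ to conclude $\Upsilon_{\bsnu}\le C_0\,C(C+1)^{|\bsnu|-1}(|\bsnu|!)^\sigma\bsb^{\bsnu}$ for $\bsnu\neq\mathbf 0$, which after undoing the rescaling gives exactly~\eqref{eq:reg2}. The main obstacle I anticipate is the bookkeeping that makes the recursion come out with the clean constant $C$ in front of the sum (matching~\eqref{eq:upsilon}) rather than something like $C\cdot a_{\max}(\bsy)/a_{\min}(\bsy)$: the trick is to work throughout with the $a(\cdot,\bsy)$-weighted energy norm $\|\,\cdot\,\|_{a,\bsy}^2 := \int_D a(\cdot,\bsy)|\nabla\,\cdot\,|^2\,\rd\bsx$ so that the Cauchy--Schwarz step is exact and no spurious ratio of $a_{\max}$ to $a_{\min}$ enters the recursion constant; the $a_{\min}(\bsy)$ then appears only once, in the base case, via $\|v\|_{a,\bsy}\ge a_{\min}(\bsy)^{1/2}\|v\|_{H_0^1(D)}$. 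I would also need to check that each $\partial^{\bsnu}u(\cdot,\bsy)\in H_0^1(D)$ is well-defined (differentiability of $\bsy\mapsto u(\cdot,\bsy)$), which follows inductively by the implicit function theorem / difference-quotient argument standard in this literature, using Assumption~\ref{assump2}.
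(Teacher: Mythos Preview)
Your proposal is correct and follows essentially the same route as the paper: differentiate the weak form, test with $\partial^{\bsnu}u$, work in the $a(\cdot,\bsy)$-weighted energy norm $\|a^{1/2}\nabla\,\cdot\,\|_{L^2(D)}$ so that Cauchy--Schwarz together with Assumption~\ref{assump2} yields exactly the recursion~\eqref{eq:upsilon} with constant $C$, and then apply Lemma~\ref{lemma:easy}. The only minor slip is the base constant: for $\Upsilon_{\bsnu}=\|a(\cdot,\bsy)^{1/2}\nabla\partial^{\bsnu}u(\cdot,\bsy)\|_{L^2(D)}$ the a~priori bound gives $\Upsilon_{\mathbf 0}\le \|f\|_{H^{-1}(D)}/a_{\min}(\bsy)^{1/2}$ (not $1/a_{\min}(\bsy)$), and the second factor of $a_{\min}(\bsy)^{-1/2}$ enters only at the end when passing from the weighted norm back to $\|\cdot\|_{H_0^1(D)}$---exactly as you anticipate in your final paragraph.
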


\proof
Let $\bsnu\in\mathscr F\setminus\{\mathbf 0\}$. Following the argument in~\cite[Theorem~2.1]{beck}, we obtain
\begin{align*}
&\|a(\cdot,\bsy)^{1/2}\nabla \partial^{\bsnu}u(\cdot,\bsy)\|_{L^2(D)}^2\\
&\leq \sum_{\substack{\boldsymbol m\leq \bsnu\\ \boldsymbol m\neq\mathbf 0}}\binom{\bsnu}{\boldsymbol m}\int_D \bigg|\frac{\partial^{\boldsymbol m}a(\bsx,\bsy)}{a(\bsx,\bsy)}a(\bsx,\bsy)\nabla \partial^{\bsnu-\boldsymbol m}u(\bsx,\bsy)\cdot \nabla \partial^{\bsnu}u(\bsx,\bsy)\bigg|\,{\rm d}\bsx\\
&\leq C\sum_{\substack{\boldsymbol m\leq \bsnu\\ \boldsymbol m\neq\mathbf 0}}\binom{\bsnu}{\boldsymbol m}(|\boldsymbol m|!)^\sigma\boldsymbol b^{\boldsymbol m}\|a(\cdot,\bsy)^{1/2}\nabla \partial^{\bsnu-\boldsymbol m}u(\cdot,\bsy)\|_{L^2(D)}\|a(\cdot,\bsy)^{1/2}\nabla \partial^{\bsnu}u(\cdot,\bsy)\|_{L^2(D)},
\end{align*}
which yields the recurrence relation
\begin{align*}
\|a(\cdot,\bsy)^{1/2}\nabla \partial^{\bsnu}u(\cdot,\bsy)\|_{L^2(D)}&\leq C\sum_{\substack{\boldsymbol m\leq \bsnu\\ \boldsymbol m\neq\mathbf 0}}\binom{\bsnu}{\boldsymbol m}(|\boldsymbol m|!)^\sigma\boldsymbol b^{\boldsymbol m}\|a(\cdot,\bsy)^{1/2}\nabla \partial^{\bsnu-\boldsymbol m}u(\cdot,\bsy)\|_{L^2(D)}.%
\end{align*}
One can apply Lemma~\ref{lemma:easy} together with the {\em a priori} bound \begin{align*}\|a(\cdot,\bsy)^{1/2}\nabla u(\cdot,\bsy)\|_{L^2(D)}\leq \frac{\|f\|_{H^{-1}(D)}}{a_{\min}(\bsy)^{1/2}}\quad\text{for all}~\bsy\in U_{\bsalpha,\tau},\end{align*}
to obtain (see, e.g., \cite[Proof of Lemma~6.5]{Kuo2016ApplicationOQ})
\begin{align*}
\|a(\cdot,\bsy)^{1/2}\nabla \partial^{\bsnu}u(\cdot,\bsy)\|_{L^2(D)}\leq \frac{\|f\|_{H^{-1}(D)}}{a_{\min}(\bsy)^{1/2}}C(C+1)^{|\bsnu|-1}(|\bsnu|!)^\sigma\boldsymbol b^{\bsnu}\,\,\text{for all}~\bsnu\in\mathscr F\setminus\{\mathbf 0\},
\end{align*}
which together with~\ref{assump5} proves~\eqref{eq:reg2}.\quad\endproof

\section{Note on infinite-dimensional integration}\label{sec:infinitedim}

The following lemma, which is adapted from~\cite[Lemma~2.28]{schwab_gittelson_2011} to the present setting, allows us to interchange the domain of integration~$\bbR^\bbN$ with~$U_{\bsalpha,\tau}$.
\begin{lemma}\label{lem:domain_change}
    There holds~$U_{\bsalpha,\tau}\in \mathcal{B}(\bbR^{\mathbb N})$, where~$\mathcal{B}(\bbR^{\mathbb N})$ denotes the Borel~$\sigma$-algebra generated by~$\bsmu_{\beta}$ and the Borel cylinders in~$\bbR^\bbN$. Moreover,~$\bsmu_\beta(U_{\bsalpha,\tau}) =1$.
\end{lemma}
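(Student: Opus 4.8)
The plan is to show two things: first that $U_{\bsalpha,\tau}$ is measurable, and second that it has full measure. For measurability, I would write $U_{\bsalpha,\tau}$ as a countable combination of cylinder sets: the partial sums $S_N(\bsy):=\sum_{j=1}^N \alpha_j|y_j|^\tau$ are continuous (hence Borel-measurable) functions depending on finitely many coordinates, so $S(\bsy):=\sup_N S_N(\bsy)=\lim_N S_N(\bsy)$ is Borel-measurable, and $U_{\bsalpha,\tau}=\{\bsy:S(\bsy)<\infty\}=\bigcup_{k\in\bbN}\{S\le k\}$ is a countable union of Borel cylinders, hence lies in $\mathcal B(\bbR^\bbN)$.

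For the full-measure claim, the key idea is a monotone convergence / Tonelli argument on the expectation of $S$. Under $\bsmu_\beta$ the coordinates $y_j$ are i.i.d.\ with density $\varphi_\beta$, so by Tonelli
\begin{align*}
\int_{\bbR^\bbN} S(\bsy)\,\bsmu_\beta(\mathrm d\bsy)=\sum_{j\geq 1}\alpha_j\int_{\bbR}|y|^\tau\varphi_\beta(y)\,\mathrm dy = \Big(\sum_{j\geq1}\alpha_j\Big)\, m_\tau,
\end{align*}
where $m_\tau:=\int_{\bbR}|y|^\tau\varphi_\beta(y)\,\mathrm dy$ is the $\tau$-th absolute moment of the univariate $\beta$-Gaussian. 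Since $0<\tau\le\beta$ and $\varphi_\beta(y)\propto\mathrm e^{-|y|^\beta/\beta}$ has Gaussian-type tails (all moments finite), $m_\tau<\infty$; a clean closed form is $m_\tau=\beta^{\tau/\beta}\Gamma(\tfrac{1+\tau}{\beta})/\Gamma(\tfrac1\beta)$, obtained by the substitution $t=|y|^\beta/\beta$. Because $\bsalpha\in\ell^1(\bbN)$, the right-hand side is finite, so $S\in L^1(\bsmu_\beta)$ and in particular $S(\bsy)<\infty$ for $\bsmu_\beta$-a.e.\ $\bsy$. This is exactly the statement $\bsmu_\beta(U_{\bsalpha,\tau})=1$.

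I would present these as two short paragraphs, with the moment computation relegated to one line. The only place requiring a little care is the interchange of sum and integral: Tonelli applies because the integrand $\alpha_j|y_j|^\tau$ is nonnegative, so no integrability needs to be checked in advance — the computation both justifies the interchange and delivers the bound. A secondary subtlety is making sure $S$ is genuinely $\mathcal B(\bbR^\bbN)$-measurable with respect to the cylinder $\sigma$-algebra rather than some larger product $\sigma$-algebra; this is immediate since each $S_N$ factors through the coordinate projection onto $\bbR^N$, and countable suprema of measurable functions are measurable. No analogue of the more delicate argument in \cite[Lemma~2.28]{schwab_gittelson_2011} (which handles Gaussian measures) is needed beyond replacing the Gaussian second-moment bound with the finite $\tau$-th moment bound above; the condition $\tau\le\beta$ is what guarantees this moment is finite, and the extra restriction $\|\bsalpha\|_\infty<\beta^{-1}$ from \ref{assump5} is not needed for this lemma (it will be used later to control $a_{\min}(\bsy)^{-1}$ in integrals, not here).
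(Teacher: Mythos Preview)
Your proposal is correct and follows essentially the same route as the paper: the paper writes $U_{\bsalpha,\tau}=\bigcup_{N\ge1}\bigcap_{M\ge1}\{\bsy:\sum_{j\le M}\alpha_j|y_j|^\tau\le N\}$ for measurability and then uses monotone convergence to show $\int S\,\mathrm d\bsmu_\beta=(\sum_j\alpha_j)\,m_\tau<\infty$, exactly as you do. One small remark: the finiteness of $m_\tau$ holds for \emph{all} $\tau>0$ (as you yourself note parenthetically), so the restriction $\tau\le\beta$ plays no role in this lemma.
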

\begin{proof}
The first statement follows from 
\begin{align*}
    U_{\bsalpha,\tau} = \bigcup_{N\ge 1} \bigcap_{M \ge 1} \left\{ \bsy \in \bbR^\bbN : \sum_{1\le j \le M} \alpha_j |y_j|^\tau \le N \right\}.
\end{align*}
By the monotone convergence theorem, we obtain
\begin{align*}
    \int_{\bbR^\bbN} \sum_{j \ge 1} \alpha_j |y_j|^\tau \,\bsmu_\beta(\mathrm d \bsy) = \sum_{j \ge1} \alpha_j \int_{\bbR^\bbN} |y_j|^\tau \,\bsmu_\beta(\mathrm d \bsy) = \frac{\Gamma\big(\frac{\tau+1}{\beta}\big)}{\beta^{1-\frac{\tau}{\beta}}\Gamma(1+\frac{1}{\beta})} \sum_{j\ge 1} \alpha_j < \infty
\end{align*}
for all~$\tau,\beta>0$, where we used~\cite[formula 3.326.2]{gradshteynryzhik}. Since the sum converges $\boldsymbol\mu_{\beta}$-a.e.~on $\mathbb R^{\mathbb N}$, it follows that $U_{\boldsymbol\alpha,\tau}$ is of measure one.\quad
\end{proof}

We will next show that~$\bsy \mapsto u(\cdot,\bsy)$ is~$\bsmu_\beta$-integrable. Indeed, from Theorem~\ref{thm:weak-rand} we infer that~$\bsy \mapsto G(u(\cdot,\bsy))$ for all~$G \in H^{-1}(D)$ is continuous as a composition of continuous mappings. Thus,~$\bsy \mapsto G(u(\cdot,\bsy))$ is measurable for all~$G \in H^{-1}(D)$, i.e.,~$\bsy \mapsto u(\cdot,\bsy)$ is weakly measurable. By Pettis' theorem (cf., e.g.,~\cite[Chapter~4]{Yosida}) we obtain that~$\bsy \mapsto u(\cdot,\bsy)$ is strongly measurable.
The~$\bsmu_{\beta}$-integrability of the upper bound in Theorem~\ref{thm:weak-rand} follows from the proof of Proposition~\ref{prop:amin} unconditionally if $\beta>\tau$; on the other hand, if $\beta=\tau$, then we need to additionally require that $\|\bsalpha\|_{\infty}<\frac{1}{\beta}$ to ensure the $\bsmu_{\beta}$-integrability of the upper bound in Theorem~\ref{thm:weak-rand}. Under these conditions, by Bochner's theorem (cf., e.g.,~\cite[Chapter~5]{Yosida}) we conclude that~$u$ is $\bsmu_{\beta}$-integrable over~$U_{\bsalpha,\tau}$.

\begin{proposition}\label{prop:amin}
    Let assumption~{\rm \ref{assump5}} hold. In the case~$\tau < \beta$ we have~$\frac{1}{a_{\min}} \in L^q_{\bsmu_\beta}(U_{\bsalpha,\tau})$ for any~$q \in (0,\infty)$. Otherwise, if~$\tau = \beta$ we have~$\frac{1}{a_{\min}} \in L^q_{\bsmu_\beta }(U_{\bsalpha,\tau})$ for all~$q \in (0,\frac{1}{\beta \|\bsalpha\|_{\infty}})$.
\end{proposition}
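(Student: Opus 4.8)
The plan is to compute the relevant moment directly. Since
\[
\frac{1}{a_{\min}(\bsy)^q} = c^{-q}\exp\!\Big(q\sum_{j\ge1}\alpha_j|y_j|^\tau\Big),
\]
and $\bsmu_\beta$ is a product measure, the expectation factorizes (after justifying the interchange of integral and infinite product by monotone convergence, as all factors are $\ge 1$):
\[
\int_{U_{\bsalpha,\tau}} \frac{\bsmu_\beta(\mathrm d\bsy)}{a_{\min}(\bsy)^q}
= c^{-q}\prod_{j\ge1} \int_{\bbR} {\rm e}^{q\alpha_j |y|^\tau}\,\varphi_\beta(y)\,{\rm d}y.
\]
So the task reduces to: (i) showing each one-dimensional factor $m_j(q):=\int_{\bbR}{\rm e}^{q\alpha_j|y|^\tau}\varphi_\beta(y)\,{\rm d}y$ is finite under the stated conditions on $q$, and (ii) showing the infinite product converges, which amounts to $\sum_{j\ge1}\big(m_j(q)-1\big)<\infty$.

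For step (i) I would split into the two cases. If $\tau<\beta$, then ${\rm e}^{q\alpha_j|y|^\tau}{\rm e}^{-|y|^\beta/\beta}$ decays superexponentially for every fixed $q,\alpha_j$, so $m_j(q)<\infty$ for all $q\in(0,\infty)$; a clean way to record this is the bound $q\alpha_j|y|^\tau - \tfrac{1}{\beta}|y|^\beta \le C_{q,\alpha_j} - \tfrac{1}{2\beta}|y|^\beta$ for a suitable constant, using Young's inequality. If $\tau=\beta$, the exponent becomes $\big(q\alpha_j-\tfrac1\beta\big)|y|^\beta$, which is integrable against Lebesgue measure precisely when $q\alpha_j<\tfrac1\beta$; since $\alpha_j\le\|\bsalpha\|_\infty$ for all $j$, the condition $q<\tfrac{1}{\beta\|\bsalpha\|_\infty}$ guarantees $m_j(q)<\infty$ simultaneously for every $j$. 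In this case one can even evaluate $m_j(q)$ in closed form via the substitution $u=|y|^\beta$ and the Gamma function (as already done in the proof of Lemma~\ref{lem:domain_change}), but an explicit value is not needed — only an asymptotic estimate as $\alpha_j\to0$.

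For step (ii), the key is a quantitative estimate of $m_j(q)-1$ as $\alpha_j\to0$. Writing $m_j(q)-1 = \int_{\bbR}\big({\rm e}^{q\alpha_j|y|^\tau}-1\big)\varphi_\beta(y)\,{\rm d}y$, and using that $\alpha_j\to0$ (so eventually $q\alpha_j$ is small, uniformly in $j$ since $(\alpha_j)$ is bounded), one has the elementary inequality ${\rm e}^{t}-1\le t\,{\rm e}^{t_0}$ for $0\le t\le t_0$; applying it with $t=q\alpha_j|y|^\tau$ on the region where this stays below a fixed threshold, and handling the (exponentially small) tail separately, yields $m_j(q)-1 \le C\,\alpha_j$ for a constant $C=C(q,\beta,\tau,\|\bsalpha\|_\infty)$ and all $j$ large enough. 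Since $\bsalpha\in\ell^1(\bbN)$ by assumption~\ref{assump5}, we get $\sum_j\big(m_j(q)-1\big)<\infty$, hence $\prod_j m_j(q)=\exp\big(\sum_j\log m_j(q)\big)\le\exp\big(\sum_j(m_j(q)-1)\big)<\infty$, using $\log(1+x)\le x$. This proves $\tfrac{1}{a_{\min}}\in L^q_{\bsmu_\beta}(U_{\bsalpha,\tau})$.

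The main obstacle is the endpoint case $\tau=\beta$: one must be careful that the integrability threshold $q\alpha_j<\tfrac1\beta$ is respected \emph{uniformly} over all $j$, which is exactly why $\|\bsalpha\|_\infty<\tfrac1\beta$ appears in~\ref{assump5} and why the admissible range of $q$ shrinks to $(0,\tfrac{1}{\beta\|\bsalpha\|_\infty})$; and in the summability step (ii) the constant $C$ in $m_j(q)-1\le C\alpha_j$ must be shown to depend only on $q,\beta,\|\bsalpha\|_\infty$ and not blow up as one ranges over $j$ — this is fine because $q\|\bsalpha\|_\infty<\tfrac1\beta$ keeps the relevant integrals bounded, but it deserves a careful line. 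Everything else is routine one-dimensional integral estimation.
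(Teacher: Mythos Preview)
Your proposal is correct and follows essentially the same route as the paper: factorize the $q$th moment over the product measure, verify integrability of each one-dimensional factor (distinguishing the cases $\tau<\beta$ and $\tau=\beta$), and then use $\bsalpha\in\ell^1(\bbN)$ to control the infinite product. The only difference is in the bookkeeping of the last step: the paper bounds each factor directly by $\exp\big((1+\tfrac{\tau}{\beta})q\alpha_j\big)$ via the elementary inequality $xy\le\tfrac{\kappa-1}{\kappa}x+\tfrac{1}{\kappa}xy^\kappa$ together with $(1-x)^{-1}\le\exp\!\big(\tfrac{x}{1-x}\big)$, so the product is bounded by $\exp\big(2q\sum_j\alpha_j\big)$ in one line, whereas you argue $m_j(q)-1=O(\alpha_j)$ and then invoke $\log(1+x)\le x$; both arrive at the same conclusion, and your version has the minor advantage of covering the $\tau=\beta$ product convergence explicitly (the paper leaves this implicit after noting each factor is finite).
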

\proof
Recall that
    \begin{align}
        \left\|\frac{1}{a_{\min}}\right\|_{L^q_{\mu_\beta}(U_{\bsalpha,\tau})}^q &= \int_{U_{\bsalpha,\tau}} \bigg|\frac{1}{a_{\min}(\bsy)}\bigg|^q\,\boldsymbol \mu_\beta(\mathrm d\bsy)\notag\\
        &=  c^{-q} \int_{U_\bsalpha,\tau}\exp\bigg(q\sum_{j=1}^\infty \alpha_j|y_j|^{\tau}\bigg)\,\boldsymbol \mu_\beta(\mathrm d\bsy) \notag\\
        &= c^{-q} \prod_{j \ge 1} c_\beta \int_{\mathbb{R}}\exp\bigg(q \alpha_j|y_j|^{\tau} - \frac{1}{\beta} |y_j|^\beta \bigg)\,\mathrm dy_j,\notag
    \end{align}
        where~$c_\beta := \frac{1}{2\beta^{\frac{1}{\beta}}\Gamma(1+\frac{1}{\beta})}$. Thus, if~$\tau = \beta$, we have~$\frac{1}{a_{\min}} \in L^q_{\bsmu_\beta}(U_{\bsalpha,\tau})$ for~$0 < q < \frac{1}{\beta\|\bsalpha\|_{\infty}}$. Now, let~$\tau < \beta$. Since~$\bsalpha \in \ell^1(\bbN)$, there exists~$j' \in \bbN$ such that~$\tau q \alpha_j <\frac{1}{2}$ for all~$j > j'$. Further, 
    \begin{align}
        \left\|\frac{1}{a_{\min}}\right\|_{L^q_{\mu_\beta}(U_{\bsalpha,\tau})}^q  &= C_{j'} \prod_{j \ge j'} c_\beta \int_{\mathbb{R}}\exp\bigg(q \alpha_j|y_j|^{\tau} - \frac{1}{\beta} |y_j|^\beta \bigg)\,\mathrm dy_j,\notag
    \end{align}
    where~$C_{j'} := c^{-q}\prod_{j=1}^{j'} \int_{\bbR} c_\beta \exp(q \alpha_j |y_j|^\tau - \beta^{-1} |y_j|^\beta) \,\mathrm dy_j$. For~$\kappa>1$ and~$x,y\ge 0$ there holds~$xy \le \frac{\kappa-1}{\kappa} x + \frac{1}{\kappa}xy^\kappa$, and thus~$q\alpha_j |y_j|^\tau \le \frac{\beta - \tau }{\beta}q \alpha_j  + \frac{\tau}{\beta} q \alpha_j |y|^{\beta}$. Hence, 
    \begin{align}
        &c_\beta \int_{\mathbb{R}}\exp\bigg(q \alpha_j|y_j|^{\tau} - \frac{1}{\beta} |y_j|^\beta \bigg)\,\mathrm dy_j \notag\\
        &\leq c_\beta\exp\left(\frac{\beta-\tau}{\beta}q \alpha_j  \right) \int_{\bbR} \exp\left(-\frac{1-\tau q \alpha_j}{\beta} |y_j|^\beta\right)\mathrm dy_j \notag\\
        &= \exp\left(\frac{\beta -\tau}{\beta}q \alpha_j  \right) \frac{1}{(1-\tau q \alpha_j)^\frac{1}{\beta}}.\notag
    \end{align}
    Further, since~$(1-x)^{-1} \le \exp(x(1-x)^{-1})$ for all~$x \in [0,1)$, there holds
    \begin{align}
        c_\beta \int_{\mathbb{R}}\exp\bigg(q \alpha_j|y_j|^{\tau} - \frac{1}{\beta} |y_j|^\beta \bigg)\,\mathrm dy_j &\le \exp\left(\frac{\beta -\tau}{\beta}q\alpha_j  \right) \exp\left(\frac{1}{\beta} \frac{\tau q \alpha_j}{1 - \tau q \alpha_j} \right) \notag\\&\le \exp\left( \left(1+\frac{\tau}{\beta}\right)q\alpha_j\right),\notag
    \end{align}
    where we used that~$\tau q \alpha_j < \frac{1}{2}$ for~$j \ge j'$. Finally, we have
    \begin{align}
        \left\|\frac{1}{a_{\min}}\right\|_{L^q_{\mu_\beta}(U_{\bsalpha,\tau})}^q  &= C_{j'} \prod_{j \ge j'} c_\beta \int_{\mathbb{R}}\exp\bigg(r \alpha_j|y_j|^{\tau} - \frac{1}{\beta} |y_j|^\beta \bigg)\,\mathrm dy_j\notag\\
        &\le C_{j'} \prod_{j \ge j'}\exp\left( \left(1+\frac{\tau}{\beta}\right)q\alpha_j\right) \notag\\
        &\le C_{j'} \exp\left( 2q\sum_{j\ge 1} \alpha_j \right) < \infty, \notag
    \end{align}
    since~$\bsalpha \in \ell^1(\bbN)$.\quad\endproof

\section{Dimension truncation error}\label{sec:dimtrunc}
Results on the dimension truncation error for PDEs (or quantities of interest thereof) subject to Gevrey regular input random fields have been derived in~\cite{chernovle2, chernovle1} for parameters with bounded support. The rate~$s^{-2(\frac{1}{p}-1)}$ obtained in these works can be improved to~$s^{-2(\frac{1}{p}-\frac{1}{2})}$ by an application of~\cite[Theorem~4.2]{gk24}. The improved rate is also obtained in the~$\beta$-Gaussian setting with~$\tau = 1$, see~\cite[Theorem~4.1]{gk24}. We generalize this result to the case~$0< \tau \le \beta$, which can be shown by following essentially the same strategy as in the proof of~\cite[Theorem~4.1]{gk24}. For completeness we state the generalized result for~$0< \tau \le \beta$ together with a compact version of the proof that highlights the differences compared to the proof of~\cite[Theorem~4.1]{gk24}.

\begin{theorem}\label{thm:dimtrunc} Let $u(\cdot,\bsy) \in H_0^1(D)$, $\bsy\in U_{\bsalpha,\tau}$, be the solution of~\eqref{eq:weak-rand}. Suppose that assumptions {\rm\ref{assump2}}--{\rm\ref{assump5}} hold. Then
\begin{align*}
\bigg\|\int_{\mathbb R^{\mathbb N}}(u(\cdot,\bsy)-u_s(\cdot,\bsy))\bsmu_\beta({\mathrm d}\bsy)\bigg\|_{H_0^1(D)}\leq Cs^{-\frac{2}{p}+1},
\end{align*}
where the constant $C>0$ is independent of the dimension $s$.

Let $G\in H^{-1}(D)$ be arbitrary. Then
\begin{align*}
\bigg|\int_{\mathbb R^{\mathbb N}}G(u(\cdot,\bsy)-u_s(\cdot,\bsy))\bsmu_\beta({\mathrm d}\bsy)\bigg|\leq C\|G\|_{X'}s^{-\frac{2}{p}+1},
\end{align*}
where the constant $C>0$ is as above.
\end{theorem}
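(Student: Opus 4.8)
The plan is to adapt the strategy of \cite[Theorem~4.1]{gk24} to the present setting, exploiting the parametric regularity bound from Theorem~\ref{thm:weak-rand} together with the integrability of $1/a_{\min}$ provided by Proposition~\ref{prop:amin}. First I would reduce the vector-valued claim to the scalar one: by the Hahn--Banach theorem it suffices to bound $|\int_{\mathbb R^{\mathbb N}}G(u(\cdot,\bsy)-u_s(\cdot,\bsy))\bsmu_\beta(\mathrm d\bsy)|$ uniformly over $G\in H^{-1}(D)$ with $\|G\|_{H^{-1}(D)}\le 1$, so the two displayed estimates are really the same statement. Then the workhorse identity is the first-order Taylor-type expansion in the truncated variables: writing $u(\cdot,\bsy)-u_s(\cdot,\bsy)$ via the fundamental theorem of calculus along the path that switches on the coordinates $y_j$, $j>s$, one expresses the difference (or rather its integral against $\bsmu_\beta$) in terms of mixed first partial derivatives $\partial^{\bse_j}u$ and $\partial^{\bse_j+\bse_k}u$ for indices $j,k>s$, picking up factors of $y_j$ from the derivatives of the path.

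The key steps, in order, are: (1) expand $G(u(\cdot,\bsy))-G(u_s(\cdot,\bsy))$ using a first-order Taylor expansion with integral remainder in the block of variables $\bsy_{\{s+1:\infty\}}$, keeping one derivative and an integral remainder term involving second derivatives, so that after integrating against the product measure the leading first-order terms have mean zero by symmetry of $\mathcal N_\beta(0,1)$ and only the second-order remainder survives; (2) apply Theorem~\ref{thm:weak-rand} to bound $\|\partial^{\bsnu}u(\cdot,\bsy)\|_{H_0^1(D)}$ by $\frac{\|f\|_{H^{-1}(D)}}{a_{\min}(\bsy)}C(C+1)^{|\bsnu|-1}(|\bsnu|!)^\sigma\bsb^{\bsnu}$ for the relevant multi-indices $\bsnu=\bse_j+\bse_k$; (3) pull the $1/a_{\min}(\bsy)$ and the polynomial-in-$\bsy$ factors (powers $|y_j|$, $|y_k|$ coming from the path and remainder) out of the integral using H\"older's inequality together with Proposition~\ref{prop:amin} and the finiteness of all absolute moments of $\mathcal N_\beta(0,1)$, yielding a constant independent of $s$; (4) sum the resulting bound over $j,k>s$, which produces a double sum $\sum_{j,k>s} b_j b_k = \big(\sum_{j>s}b_j\big)^2$; (5) invoke assumptions \ref{assump3}--\ref{assump4} and the standard tail estimate $\sum_{j>s}b_j\lesssim s^{-(1/p-1)}$ for monotone $p$-summable sequences (Stechkin's lemma), giving $\big(\sum_{j>s}b_j\big)^2\lesssim s^{-2(1/p-1)}=s^{-2/p+2}$; finally absorb the extra factor $s$ — i.e., improve $s^{-2/p+2}$ to $s^{-2/p+1}$ — by noting that one index, say $j$, runs over $\{s+1:\infty\}$ while summation against the product-structure allows an extra $b_s\le$ (average) $\lesssim s^{-1/p}$ gain, matching the sharper bookkeeping in \cite[Theorem~4.1]{gk24}.

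The main obstacle I expect is step (3)–(4): controlling the $\bsy$-dependence uniformly. Unlike the bounded-parameter case, $a_{\min}(\bsy)$ is not bounded below by a constant, so every application of the regularity bound carries a factor $1/a_{\min}(\bsy)=c^{-1}\exp(\sum_j\alpha_j|y_j|^\tau)$, and the path integral and Taylor remainder additionally contribute powers of $|y_j|$, $|y_k|$. One must therefore check that the product measure integral of $\exp(\sum_j\alpha_j|y_j|^\tau)\,|y_j|^{a}|y_k|^{b}$ is finite and, crucially, that the constant does \emph{not} grow with $j,k$ or $s$; this is exactly where the constraint $0<\tau\le\beta$ (and, when $\tau=\beta$, the smallness condition $\|\bsalpha\|_\infty<\beta^{-1}$ in \ref{assump5}) enters, via Proposition~\ref{prop:amin} and the observation that multiplying the $j$th and $k$th one-dimensional factors by polynomial weights only changes those two factors by an $s$-independent multiplicative constant. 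Once this integrability is in hand with an $s$-uniform constant, the remaining summation and the Stechkin tail estimate are routine, and the proof closely mirrors \cite[Theorem~4.1]{gk24}; I would present only the points where the $\tau\le\beta$ generalization and the $1/a_{\min}$ factor change the argument, referring to \cite{gk24} for the rest.
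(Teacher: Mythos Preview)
Your overall architecture---reduce to the scalar case via duality, Taylor expand in the tail variables, bound derivatives by Theorem~\ref{thm:weak-rand}, control the $\bsy$-integrals with Proposition~\ref{prop:amin}, then sum and apply Stechkin---is exactly right, and your discussion of step~(3) correctly identifies where $\tau\le\beta$ (and, when $\tau=\beta$, $\|\bsalpha\|_\infty<\beta^{-1}$) enters. The gap is the expansion order. A first-order Taylor expansion with second-order integral remainder leaves you with a remainder in which the second derivatives are evaluated at intermediate points depending on $\bsy$, so there is \emph{no} symmetry cancellation there; you must bound the full double sum $\sum_{j,k>s}b_jb_k=\big(\sum_{j>s}b_j\big)^2\lesssim s^{-2(1/p-1)}=s^{-2/p+2}$, which is one power of $s$ short. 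Your proposed fix in step~(5)---``absorb the extra factor $s$'' by trading a tail sum for a single $b_s\lesssim s^{-1/p}$---has no mechanism behind it: nothing in a second-order remainder bound singles out one index.

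The missing idea, and this is precisely the content of \cite[Theorem~4.1]{gk24} that the paper invokes, is to Taylor expand to order $k=\lceil 1/(1-p)\rceil$. The polynomial terms of orders $1,\dots,k$ are then evaluated at the centre $(\bsy_{\le s},\mathbf 0)$, and integrating against $\bsmu_\beta$ kills every monomial with some $\nu_j=1$ by symmetry; only multi-indices with $\nu_j\in\{0,2,3,\dots\}$ for all $j>s$ survive. At order $\ell=2$ this forces a single index with $\nu_j=2$, giving $\sum_{j>s}b_j^2$, and Stechkin applied to $(b_j^2)\in\ell^{p/2}$ yields $\sum_{j>s}b_j^2\lesssim s^{-(2/p-1)}$, which is the claimed rate. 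The higher polynomial orders $\ell=3,\dots,k$ are of the same or smaller order, and the order-$(k{+}1)$ remainder satisfies $\big(\sum_{j>s}b_j\big)^{k+1}\lesssim s^{-(k+1)(1/p-1)}\le s^{-(2/p-1)}$ exactly because $k\ge 1/(1-p)$. Replace your first-order expansion by this $k$th-order one; your steps~(2)--(3) for handling $1/a_{\min}$ and polynomial moments uniformly in $s$ then go through unchanged.
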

\proof Let $G\in H^{-1}(D)$ be arbitrary and define
\begin{align*}
F_G(\bsy):=\langle G,u(\cdot,\bsy)\rangle_{H^{-1}(D),H_0^1(D)}\quad\text{for all}~\bsy\in U_{\boldsymbol\alpha,\tau}.
\end{align*}
Then
\begin{align*}
\partial^{\bsnu}F_G(\bsy)=\langle G,\partial^{\bsnu}u(\cdot,\bsy)\rangle_{H^{-1}(D),H_0^1(D)}\quad\text{for all}~\bsnu\in\mathscr F~\text{and}~\bsy\in U_{\boldsymbol\alpha,\tau}
\end{align*}
and it follows from our assumptions that
\begin{align*}
|\partial^{\bsnu}F_G(\bsy)|\leq \|G\|_{H^{-1}(D)} \Theta_{|\bsnu|} \boldsymbol b^{\bsnu} \prod_{j\geq 1}{\rm e}^{\alpha_j|y_j|^\tau} \quad\text{for all}~\bsnu\in\mathscr F~\text{and}~\bsy\in U_{\boldsymbol\alpha,\tau},
\end{align*}
where~$\Theta_{|\bsnu|} = \frac{\|f\|_{H^{-1}(D)}}{c} (C+1)^{|\bsnu|}(|\bsnu|!)^\sigma$.
This bound differs from the bound on~$|\partial^\bsnu F_G|$ in the proof of~\cite[Theorem~4.1]{gk24} only by the exponent~$\tau$. For this reason the proof follows essentially the same steps and we will here only highlight the differences.

Let us denote $\bsy_{\leq s}:=(y_j)_{j=1}^s$ and $\bsy_{>s}:=(y_j)_{j=s+1}^\infty$. Developing the Taylor expansion of~$F_G$ about the point~$(\bsy_{\le s},\boldsymbol{0})$ and computing the integral with respect to~$\boldsymbol\mu_{\beta}$ leads to the bound
\begin{align}
&\bigg|\int_{\mathbb R^{\mathbb N}}(F_G(\bsy)-F_G(\bsy_{\leq s},\mathbf 0))\,\bsmu_{\beta}({\mathrm d}\bsy)\bigg|\notag\\
&\leq \sum_{\ell=2}^k \sum_{\substack{|\bsnu|=\ell\\ \nu_j=0~\forall j\leq s\\ \nu_j\neq 1~\forall j>s}}\frac{1}{\bsnu!}\int_{\mathbb R^{\mathbb N}}|\bsy^{\bsnu}|\cdot|\partial^{\bsnu}F_G(\bsy_{\leq s},\mathbf 0)|\,\bsmu_{\beta}({\rm d}\bsy)\label{eq:term1}\\
&\quad +\sum_{\substack{|\bsnu|=k+1\\ \nu_j=0~\forall j\leq s}}\frac{k+1}{\bsnu!}\int_{\mathbb R^{\mathbb N}}\int_0^1 (1-t)^k|\bsy^{\bsnu}|\cdot|\partial^{\bsnu}F_G(\bsy_{\leq s},t \bsy_{>s})|\,{\rm d}t\,\bsmu_\beta({\mathrm d}\bsy),\label{eq:term2}
\end{align}
with~$k = \lceil \frac{1}{1-p} \rceil$, see~\cite[Equations~(4.1)--(4.2)]{gk24}. We begin our estimation by splitting the terms in \eqref{eq:term1}:
\begin{align*}
&\int_{\mathbb R^{\mathbb N}}|\bsy^{\bsnu}|\cdot|\partial^{\bsnu}F_G(\bsy_{\leq s},\mathbf 0)|\,\bsmu_{\beta}({\rm d}\bsy) %
\leq \|G\|_{H^{-1}(D)}\Theta_{|\bsnu|}\bsb^{\bsnu} \int_{\mathbb R^{\mathbb N}}|\bsy^{\bsnu}| \bigg(\prod_{j\geq 1} {\rm e}^{\alpha_j |y_j|^\tau}\bigg)
\bsmu_{\beta}(\mathrm d\bsy)\\
&=\|G\|_{H^{-1}(D)}\Theta_{|\bsnu|}\bsb^{\bsnu} \bigg(\underbrace{\prod_{j\in {\rm supp}{(\bsnu)}} \int_{\mathbb{R}} |y_j|^{\nu_j} {\rm e}^{\alpha_j|y_j|^\tau}\varphi_\beta(y_j){\mathrm d}y_j\!}_{=:\rm term_1}\bigg)\\
&\quad\quad\quad\quad\quad\quad\quad\quad\quad\quad \times\bigg(\underbrace{\prod_{j\notin {\rm supp}{(\bsnu)}}  \int_{\mathbb{R}} {\rm e}^{\alpha_j|y_j|^\tau}\varphi_\beta(y_j){\mathrm d}y_j\!}_{=:\rm term_2}\bigg). 
\end{align*}
In order to bound ${\rm term_1}$, observe that
\begin{align}\label{eq:Cdef}
C_{\alpha_j,\beta,\nu_j}:=\int_{\mathbb R}|y_j|^{\nu_j}{\rm e}^{\alpha_j|y_j|^\tau}\varphi_{\beta}(y_j)\,{\mathrm d}y_j<\infty
\end{align}
since we assumed that $\beta\geq\tau$ and $\alpha_j<\frac{1}{\beta}$ in the case $\beta =\tau$. By defining the auxiliary constant~$A_{\alpha_j,\beta,\ell}:=\max_{1\leq k\leq \ell}C_{\alpha_j,\beta,k}$, we have
\begin{align*}
{\rm term_1} &\leq \max{\{1,A_{\|\bsalpha\|_{\infty},\beta,|\bsnu|}\}}^{|\bsnu|},
\end{align*}
where $\|\bsalpha\|_\infty:=\sup_{j\geq 1}|\alpha_j|$ is finite since $\bsalpha\in\ell^1(\mathbb N)$ by assumption. To bound ${\rm term_2}$, we note that there is an index $j'\in\mathbb N$ such that~$\alpha_j\leq\frac{1}{2\tau}$ for all $j>j'$. Thus
\begin{align*}
{\rm term}_2%
&\leq \max\{1,C_{\|\bsalpha\|_\infty,\beta,0}\}^{j'}\bigg(\prod_{\substack{j\not\in {\rm supp}{(\bsnu)}\\ j>j'}} \int_{\mathbb{R}} {\rm e}^{\alpha_j|y_j|^\tau}\varphi_\beta(y_j)\,{\mathrm d}y_j\bigg).
\end{align*}
In order to ensure that the remaining factor is finite, we argue as in the proof of Proposition~\ref{prop:amin} that
\begin{align*}
\bigg(\prod_{\substack{j\not\in {\rm supp}{(\bsnu)}\\ j>j'}} \int_{\mathbb{R}} {\rm e}^{\alpha_j|y_j|^\tau}\varphi_\beta(y_j)\,{\mathrm d}y_j\bigg) \le \prod_{\substack{j\not\in {\rm supp}{(\bsnu)}\\ j>j'}} {\rm e}^{(1+\frac{\tau}{\beta}) \alpha_j} \le {\rm e}^{\sum_{j \ge 1} 2\alpha_j} = \widetilde{C} < \infty,
\end{align*}
since~$\bsalpha \in \ell^1(\bbN)$. Combining the estimates for ${\rm term_1}$ and ${\rm term_2}$ gives
\begin{align*}
    &\int_{\mathbb R^{\mathbb N}}|\bsy^{\bsnu}|\cdot|\partial^{\bsnu}F_G(\bsy_{\leq s},\mathbf 0)|\,\bsmu_{\beta}({\rm d}\bsy)\\ &\leq \widetilde{C}\|G\|_{H^{-1}(D)}\max\{1,C_{\|\bsalpha\|_\infty,\beta,0}\}^{j'}\Theta_{|\bsnu|}\bsb^{\bsnu}  \max{\{1,A_{\|\bsalpha\|_{\infty},\beta,|\bsnu|}\}}^{|\bsnu|},
\end{align*}
where $C_{\|\bsalpha\|_\infty,\beta,0}$ is defined in \eqref{eq:Cdef}.

Similarly, using~$\big(\prod_{j=1}^s {\rm e}^{\alpha_j|y_j|^\tau}\big)\big(\prod_{j>s}{\rm e}^{t\alpha_j|y_j|^\tau}\big) \le \prod_{j\geq1} {\rm e}^{\alpha_j|y_j|^\tau}$, we split the terms in \eqref{eq:term2},
\begin{align*}
    &\int_{\mathbb R^{\mathbb N}}\int_0^1 (1-t)^k|\bsy^{\bsnu}|\cdot|\partial^{\bsnu}F_G(\bsy_{\leq s},t \bsy_{>s})|\,{\rm d}t\,\bsmu_\beta({\mathrm d}\bsy)\\
    &\leq \widetilde{C}\|G\|_{H^{-1}(D)}\Theta_{|\bsnu|}\max\{1,C_{\|\bsalpha\|_\infty,\beta,0}\}^{j'}\bsb^{\bsnu} \max{\{1,A_{\|\bsalpha\|_{\infty},\beta,|\bsnu|}\}}^{|\bsnu|}.
\end{align*}

The estimates for~\eqref{eq:term1} and~\eqref{eq:term2} lead to
\begin{align}
&\bigg|\int_{\mathbb R^{\mathbb N}}(F_G(\bsy)-F_G(\bsy_{\leq s},\mathbf 0))\,\bsmu_{\beta}({\mathrm d}\bsy)\bigg|\notag\\
&\leq \widetilde{C}\|G\|_{H^{-1}(D)}\max\{1,C_{\|\bsalpha\|_\infty,\beta,0}\}^{j'} \big(\max_{2\leq\ell\leq k}(\Theta_{\ell}\max\{1,A_{\|\bsalpha\|_\infty,\beta,\ell}\}^\ell)\big)\sum_{\ell=2}^k \sum_{\substack{|\bsnu|=\ell\\ \nu_j=0~\forall j\leq s\\ \nu_j\neq 1~\forall j>s}}\bsb^{\bsnu}\notag\\%
& +\widetilde{C}\|G\|_{H^{-1}(D)}\max\{1,C_{\|\bsalpha\|_\infty,\beta,0}\}^{j'}\Theta_{k+1}(k+1)\max\{1,A_{\|\bsalpha\|_\infty,\beta,k+1}\}^{k+1} \!\!\sum_{\substack{|\bsnu|=k+1\\ \nu_j=0\,\forall j\leq s}}\bsb^{\bsnu}\notag.%
\end{align}
The desired result follows by the same steps as in the proof of~\cite[Theorem~4.1]{gk24}.\quad\endproof

\section{Application of quasi-Monte Carlo methods}\label{sec:qmcerror}
The regularity results derived in Section~\ref{sec:reg} allow the application of QMC methods to approximate integrals over solutions of PDEs equipped with random input fields, covering a wider class of parameterizations than covered in the existing literature. In particular, fat-tailed distributions corresponding to~$\beta<2$ are covered by our theory.

Let $F\!:\mathbb R^s\to \mathbb R$ be a continuous and integrable function with respect to the product measure $\bigotimes_{j=1}^s \mathcal N_{\beta}(0,1)$. We will consider approximating integral quantities
\begin{align*}
I_s(F):=\int_{\mathbb R^s}F(\bsy) \bigg(\prod_{j=1}^s \varphi_{\beta}(y_j)\bigg)\,{\rm d}\bsy=\int_{(0,1)^s}F(\Phi_{\beta}^{-1}(\boldsymbol w))\,{\rm d}\boldsymbol w,
\end{align*}
where $\varphi_{\beta}\!:\mathbb R\to\mathbb R_+$ is the probability density function defined by~\eqref{eq:betadef} and $\Phi_{\beta}^{-1}$ denotes the inverse cumulative distribution function corresponding to $\prod_{j=1}^s\varphi_{\beta}(y_j)$.

A randomly shifted rank-1 lattice rule is given by
\begin{align*}
Q_{s,n}^{\boldsymbol\Delta}(F):=\frac{1}{n} \sum_{i=1}^n F(\Phi_\beta^{-1}(\{\bst_i+\boldsymbol\Delta\})),
\end{align*}
where $\{\cdot\}$ denotes the componentwise fractional part, $\boldsymbol\Delta$ is a random shift drawn from $\mathcal U([0,1]^s)$, and the lattice points
\begin{align*}
\bst_i:=\bigg\{\frac{i\boldsymbol z}{n}\bigg\}\quad\text{for}~i\in\{1,\ldots,n\},
\end{align*}
are completely characterized by a {\em generating vector} $\boldsymbol z\in\{1,\dots,n-1\}^s$.

Let $\boldsymbol\gamma=(\gamma_{\setu})_{\setu\subseteq\{1:s\}}$ be a sequence of positive weights. We assume that the integrand $F$ belongs to a special weighted Sobolev space in $\mathbb R^s$ with bounded first order mixed partial derivatives, the norm of which is given by
\begin{align*}
\|F\|_{s,\bsgamma}^2:=\!\sum_{\setu\subseteq\{1:s\}}\!\frac{1}{\gamma_{\setu}}\!\int_{\mathbb R^{|\setu|}}\!\bigg(\int_{\mathbb R^{s-|\setu|}}\!\frac{\partial^{|\setu|}}{\partial \bsy_{\setu}}F(\bsy)\bigg(\prod_{j\in-\setu}\!\varphi_{\beta}(y_j)\bigg)\,{\rm d}\bsy_{-\setu}\bigg)^2\!\bigg(\prod_{j\in\setu}\psi^2(y_j)\bigg)\,{\rm d}\bsy_{\setu},
\end{align*}
where we define the weight functions as exponentially decaying
\begin{align*}
\psi^2(x):={\rm e}^{-\theta |x|^\tau},\quad x\in \mathbb R,
\end{align*}
with parameter $\theta>0$.

The following result, which generalizes~\cite[Theorem~5.1]{hks21}, states that generating vectors can be constructed by a \emph{component-by-component (CBC)} algorithm satisfying rigorous error bounds.

\begin{theorem}%
\label{thm:heavyqmc}
Let $0<\tau\leq\beta$ and let $F$ belong to the special weighted space over $\mathbb R^s$ with weights $\bsgamma=(\gamma_{\setu})_{\setu\subseteq\{1:s\}}$. A randomly shifted lattice rule with $n\geq 2$ points in $s$ dimensions can be constructed by a CBC algorithm such that for all $\lambda\in(1/(2r),1]$, there holds
\begin{align*}
\sqrt{\mathbb E_{\boldsymbol\Delta}|I_s(F)-Q_{s,n}^{\boldsymbol\Delta}(F)|^2}\leq \bigg(\frac{1}{\phi(n)}\sum_{\varnothing\neq\mathfrak u\subseteq\{1:s\}}\gamma_{\setu}^\lambda K^{\lambda |\setu|}(2\zeta(2r\lambda))^{|\setu|}\bigg)^{1/(2\lambda)}\|F\|_{s,\bsgamma},
\end{align*}
where $\phi$ denotes the Euler totient function and
\begin{align*}
K\!:=\!\begin{cases}
\!\eqref{eq:Kdef}~\text{with arbitrary $\theta>0$ and $r\in (\frac{1}{2},1)$}&\text{if}~0<\tau<\beta<1,\\%
\!\eqref{eq:newkdef2}~\text{with $0<\theta<\frac{1}{\beta}$ and arbitrary $r \in (\frac{1}{2},1-\frac{\theta\beta}{2})$}&\text{if}~0<\tau=\beta<1,\\
\!\eqref{eq:K-thirdcase}~\text{with arbitrary $\theta>0$ and $r\in (\frac{1}{2},1)$}&\text{if}~0<\!\tau\!<1\le\beta\\&\text{or}~1\le \tau\!<\!\beta,\\
\!\eqref{eq:newkdeffinal}~\text{with $0<\theta<\frac{1}{\beta}$ and arbitrary $r \in (\frac{1}{2},1-\frac{\theta\beta}{2})$}&\text{if}~1\leq\tau=\beta.
\end{cases}
\end{align*}
\end{theorem}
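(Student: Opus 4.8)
The plan is to follow the standard CBC error-analysis template for randomly shifted rank-1 lattice rules in weighted Sobolev spaces (as in \cite{kss12,log,hks21}), but to carefully track the effect of the exponentially decaying weight function $\psi^2(x) = {\rm e}^{-\theta|x|^\tau}$ together with the transplantation through the inverse CDF $\Phi_\beta^{-1}$. The worked-out case $\tau=\beta=1$ in \cite[Theorem~5.1]{hks21} is the model; the four-way case split in the statement comes from the different growth behaviors of the transformed integrand, which depend on how $\tau$ compares to $\beta$ and to $1$.

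First I would recall the shift-averaged worst-case error bound: for the (reproducing-kernel) weighted space with kernel built from the univariate kernels associated to the weight $\psi^2$, the mean-square error of a randomly shifted lattice rule equals $e_{s,n}^{\wor}(\bsz,\bsgamma)\,\|F\|_{s,\bsgamma}$, and by the standard CBC argument (Jensen's inequality applied to the $\lambda$-th moment, followed by the averaging/induction over coordinates) one can construct $\bsz$ so that
\begin{align*}
\big(e_{s,n}^{\wor}(\bsz,\bsgamma)\big)^{2} \le \bigg(\frac{1}{\phi(n)}\sum_{\varnothing\neq\setu\subseteq\{1:s\}}\gamma_\setu^\lambda\,\big(2\zeta(2r\lambda)\,\widehat\theta^{\,\lambda}\big)^{|\setu|}\bigg)^{1/\lambda}
\end{align*}
for all $\lambda\in(1/(2r),1]$, where $\widehat\theta = \widehat\theta(\tau,\beta,\theta,r)$ is the key one-dimensional constant
\begin{align*}
\widehat\theta := \esup_{w\in(0,1)}\, \frac{1}{\psi^2(\Phi_\beta^{-1}(w))}\,\big(\varphi_\beta(\Phi_\beta^{-1}(w))\big)^{2}\cdot\big(\min\{w,1-w\}\big)^{-2r}\ \ \text{(schematically)},
\end{align*}
i.e.\ the quantity whose finiteness governs whether the chosen $r$ admits a valid CBC construction. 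Setting $K := \widehat\theta$ and rewriting $\widehat\theta^{|\setu|}$ as $K^{|\setu|}$, and using $K^{\lambda|\setu|}\ge K^{|\setu|}$-type bookkeeping to pull out the $\lambda$-power cleanly, yields exactly the displayed bound. The four formulas \eqref{eq:Kdef}, \eqref{eq:newkdef2}, \eqref{eq:K-thirdcase}, \eqref{eq:newkdeffinal} are the explicit evaluations of this $K$ in the four regimes.

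The technical heart — and the main obstacle — is bounding $\widehat\theta$, i.e.\ establishing integrability/boundedness of the one-dimensional quantity
\begin{align*}
\int_{\mathbb R}\frac{\varphi_\beta'(y)^2}{\varphi_\beta(y)^2}\,\psi^{-2}(y)\,\varphi_\beta(y)\,{\rm d}y \quad\text{together with}\quad \sup_{y\in\mathbb R}\Big(\Phi_\beta(y)(1-\Phi_\beta(y))\Big)^{?}\psi^{-2}(y),
\end{align*}
and showing these are finite exactly under the stated constraints on $\theta$ and $r$. For $y\to\pm\infty$ one has $\varphi_\beta'(y)/\varphi_\beta(y)\sim -\mathrm{sgn}(y)|y|^{\beta-1}$ and $\psi^{-2}(y)={\rm e}^{\theta|y|^\tau}$, so the Gaussian-type weight ${\rm e}^{-|y|^\beta/\beta}$ must dominate. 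If $\tau<\beta$ this holds for any $\theta>0$; if $\tau=\beta$ it forces $\theta<1/\beta$ and then the remaining polynomial-in-$r$ Gamma-type integral converges only for $r<1-\theta\beta/2$ (this is where that constraint on $r$ enters). The extra split at $\tau=1$ (resp.\ $\beta=1$) arises because the tail exponent of $\Phi_\beta(y)(1-\Phi_\beta(y))$ relative to $\psi^2$ changes behaviour: for $\beta\ge1$ the lattice-rule theory can use the stronger estimate, for $\beta<1$ the fat tail requires restricting $r\in(\tfrac12,1)$ and carefully matching $\min\{w,1-w\}^{-2r}$ against $\psi^{-2}(\Phi_\beta^{-1}(w))$ near the endpoints $w\in\{0,1\}$. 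In each of the four cases one then simply records the resulting explicit constant as $K$, and the CBC/Jensen machinery is entirely standard from there.
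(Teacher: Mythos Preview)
Your high-level scaffolding is right: invoke the CBC machinery of \cite{NicholsKuo14}, reduce the whole thing to a one-dimensional estimate, and split into cases according to the tail behaviour. But the one-dimensional object you propose to bound is not the one the paper uses, and as written your plan would not close.

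Concretely, the paper does \emph{not} bound a single supremum-type constant $\widehat\theta$, nor the integral $\int_{\mathbb R}\frac{(\varphi_\beta')^2}{\varphi_\beta^2}\,\psi^{-2}\,\varphi_\beta$; those quantities belong to a different formulation and would not by themselves deliver the exponent~$r$. What is actually needed (and this is what \cite[Theorem~8]{NicholsKuo14} requires) is a decay estimate for the Fourier-type coefficient
\[
\widehat\Theta(h)\;=\;\frac{2}{\pi^2 h^2}\int_0^{1/2}\frac{\sin^2(\pi h t)}{\psi^2(\Phi_\beta^{-1}(t))\,\varphi_\beta(\Phi_\beta^{-1}(t))}\,{\rm d}t\;\le\;K\,h^{-2r}\quad\text{for all }h\ge 1,
\]
and $K$ is the constant in this bound. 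Note in particular that $\varphi_\beta$ appears in the \emph{denominator}, not squared in the numerator as in your schematic formula. The parameter~$r$ enters through the power of $h$, not through a power of $\min\{w,1-w\}$ in a supremum.

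The paper's route to this estimate has three ingredients you are missing. First, a pointwise tail bound on the inverse CDF: for $\beta\ge 1$ one has $\exp(\tfrac{1}{\beta}|\Phi_\beta^{-1}(t)|^\beta)\le t^{-1}$ (known from \cite{hks21}), while for $0<\beta<1$ the paper proves the new inequality $\exp(\tfrac{1}{\beta}|\Phi_\beta^{-1}(t)|^\beta)\le H_{\beta,\varepsilon}\,t^{-1-\varepsilon}$ via an incomplete-gamma estimate; this is precisely why the case split occurs at $\beta=1$, not because of any change in ``polynomial-in-$r$'' behaviour. Second, for $\tau<\beta$ Young's inequality gives $\theta|y|^\tau\le \tfrac{\varepsilon}{\beta}|y|^\beta + C_{\varepsilon,\theta,\tau,\beta}$, which converts the $\psi^{-2}$ factor into a perturbation of the $\varphi_\beta^{-1}$ factor; this is why $\tau=\beta$ must be treated separately and forces $\theta<1/\beta$. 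Third, once the integrand is reduced to $\sin^2(\pi h t)\,t^{-\alpha}$, the bound $\int_0^{1/2}\sin^2(\pi h t)\,t^{-\alpha}\,{\rm d}t \lesssim h^{\alpha-2}$ from \cite[Lemma~3]{KSWW10} yields the required $h^{-2r}$ decay and identifies $r$ in terms of $\varepsilon$ (and $\theta\beta$ in the $\tau=\beta$ cases).
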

\proof We will consider the different cases separately.

\paragraph{Case $0<\tau<\beta<1$} We begin by establishing the inequality
\begin{align}\label{eq:horrible}
\exp\bigg(\frac{1}{\beta}|\Phi_\beta^{-1}(t)|^{\beta}\bigg)\leq \frac{H_{\beta,\varepsilon}}{t^{1+\varepsilon}}\quad\text{for all}~t\in(0,\tfrac12)~\text{and}~\varepsilon>0,
\end{align}
where
\begin{align*}
H_{\beta,\varepsilon}:=\frac{1}{(2\Gamma(\frac{1}{\beta}))^{1+\varepsilon}}\bigg(\frac{1-\beta}{{\rm e}\beta\varepsilon}\bigg)^{\frac{(1-\beta)(1+\varepsilon)}{\beta}}(1+\varepsilon)^{\frac{1+\varepsilon}{\beta}}.
\end{align*}
When $w<0$, there holds (see, e.g.,~\cite[Equation~5]{incomplgamma})
\begin{align*}
\Phi_{\beta}(w)=\frac{1}{2\Gamma(\frac{1}{\beta})}\int_{\frac{|w|^\beta}{\beta}}^\infty t^{\frac{1}{\beta}-1}{\rm e}^{-t}\,{\rm d}t=\frac{1}{2\Gamma(\frac{1}{\beta})}\int_{\frac{|w|^\beta}{\beta}}^\infty t^{\frac{1}{\beta}-1}{\rm e}^{-\big(1-\frac{1}{1+\varepsilon}\big)t}{\rm e}^{-\frac{1}{1+\varepsilon}t}\,{\rm d}t,
\end{align*}
where $\Gamma$ denotes the gamma function and
\begin{align*}
t^{\frac{1}{\beta}-1}{\rm e}^{-\big(1-\frac{1}{1+\varepsilon}\big)t}\leq \bigg(\frac{(1-\beta)(1+\varepsilon)}{{\rm e}\beta\varepsilon}\bigg)^{\frac{1}{\beta}-1}\quad\text{for all}~t>0~\text{and}~\varepsilon>0.
\end{align*}
Therefore
\begin{align*}
\Phi_{\beta}(w)&\leq \frac{1}{2\Gamma(\frac{1}{\beta})}\bigg(\frac{(1-\beta)(1+\varepsilon)}{{\rm e}\beta\varepsilon}\bigg)^{\frac{1}{\beta}-1}\int_{\frac{|w|^\beta}{\beta}}^\infty {\rm e}^{-\frac{1}{1+\varepsilon}t}\,{\rm d}t\\
&=\frac{1}{2\Gamma(\frac{1}{\beta})}\bigg(\frac{1-\beta}{{\rm e}\beta\varepsilon}\bigg)^{\frac{1}{\beta}-1}(1+\varepsilon)^{\frac{1}{\beta}}{\rm e}^{-\frac{1}{1+\varepsilon}\frac{|w|^\beta}{\beta}}.
\end{align*}
Since we can now write $t=\Phi_{\beta}(w)$ $\Leftrightarrow$ $w=\Phi_{\beta}^{-1}(t)$ for all $t\in(0,\frac12)$, the inequality~\eqref{eq:horrible} follows by simple algebraic manipulation.

Our next goal is to ensure that the conditions of~\cite[Lemma~3]{KSWW10} are satisfied. In particular, we let $h\geq 1$, define
\begin{align*}
\widehat\Theta(h)=\frac{2}{\pi^2h^2}\int_0^{1/2}\frac{\sin^2(\pi h t)}{\psi^2(\Phi_{\beta}^{-1}(t))\phi_{\beta}(\Phi_{\beta}^{-1}(t))}\,{\rm d}t,
\end{align*}
and proceed to derive a bound for $\widehat \Theta(h)$ in terms of $h$. To this end, we have
\begin{align}
\widehat \Theta(h)=\frac{2}{\pi^2c_{\beta}h^2}\int_0^{1/2}\sin^2(\pi h t)\exp\bigg(\theta |\Phi_{\beta}^{-1}(t)|^{\tau}+\frac{1}{\beta}|\Phi_{\beta}^{-1}(t)|^\beta\bigg)\,{\rm d}t.\label{eq:thetahatref}
\end{align}
We can bound the term~$\theta|\Phi_\beta^{-1}(t)|^\tau$ by using Young's inequality as follows:
\begin{align}
 \theta |\Phi_\beta^{-1}(t)|^\tau \le \frac{\varepsilon}{\beta} |\Phi_\beta^{-1}(t)|^\beta + \left(\frac{\varepsilon}{\tau} \right)^{1-\frac{\beta}{\beta-\tau}} \frac{\beta-\tau}{\beta} \theta^{\frac{\beta}{\beta-\tau}}.\label{eq:young}
\end{align}
This leads to the upper bound
\begin{align}
\widehat \Theta(h)&\leq \frac{2}{\pi^2c_{\beta}h^2}\exp\bigg(\bigg(\frac{\varepsilon}{\tau}\bigg)^{1-\frac{\beta}{\beta-\tau}}\frac{\beta-\tau}{\beta}\theta^{\frac{\beta}{\beta-\tau}}\bigg)\!\int_0^{1/2}\!\sin^2(\pi h t)\exp\bigg(\frac{1+\varepsilon}{\beta}|\Phi_{\beta}^{-1}(t)|^\beta\bigg)\,{\rm d}t\notag\\
&\leq \frac{2H_{\beta,\varepsilon}^{1+\varepsilon}}{\pi^2c_{\beta}h^2}\exp\bigg(\bigg(\frac{\varepsilon}{\tau}\bigg)^{1-\frac{\beta}{\beta-\tau}}\frac{\beta-\tau}{\beta}\theta^{\frac{\beta}{\beta-\tau}}\bigg)\int_0^{1/2}\sin^2(\pi h t)t^{-1-2\varepsilon-\varepsilon^2}\,{\rm d}t\notag\\
&\leq \frac{4\pi^{\varepsilon^2-2+2\varepsilon}H_{\beta,\varepsilon}^{1+\varepsilon}}{(2-\varepsilon^2-2\varepsilon)(\varepsilon^2+2\varepsilon)c_{\beta}}\exp\bigg(\bigg(\frac{\varepsilon}{\tau}\bigg)^{1-\frac{\beta}{\beta-\tau}}\frac{\beta-\tau}{\beta}\theta^{\frac{\beta}{\beta-\tau}}\bigg)h^{-2+\varepsilon^2+2\varepsilon},\label{eq:Thetabound}
\end{align}
where we applied~\eqref{eq:horrible} and~\cite[Lemma~3]{KSWW10}. By defining
\begin{align}
r=1-\frac12\varepsilon^2-\varepsilon\in(\tfrac12,1)\quad\Leftrightarrow\quad \varepsilon=\sqrt{1+2(1-r)}-1\in(0,\sqrt 2-1),\label{eq:rdef}
\end{align}
we obtain by direct application of~\cite[Theorem~8]{NicholsKuo14} for the root-mean-squared (R.M.S.) error that
\begin{align*}
{\rm R.M.S.~error}\leq\! \bigg(\frac{1}{\phi(n)}\!\sum_{\varnothing\neq\setu\subseteq\{1:s\}}\!\gamma_{\setu}^{\lambda}K^{\lambda|\setu|}(2\zeta(2r\lambda))^{|\setu|}\bigg)^{1/(2\lambda)}\|F\|_{{s,\bsgamma}}\quad\text{for all}~\!\lambda\in \big(\tfrac{1}{2r},1\big],
\end{align*}
where $r\in(\frac12,1)$ can be chosen arbitrarily and
\begin{align}
K&=\frac{4\pi^{-2r}}{(2r)(2-2r)c_{\beta}}\exp\bigg(\bigg(\frac{\sqrt{1+2(1-r)}-1}{\tau}\bigg)^{1-\frac{\beta}{\beta-\tau}}\frac{\beta-\tau}{\beta}\theta^{\frac{\beta}{\beta-\tau}}\bigg)\frac{1}{(2\Gamma(\frac{1}{\beta}))^{1+2(1-r)}}\notag\\
&\quad \times\bigg(\frac{1-\beta}{{\rm e}\beta(\sqrt{1+2(1-r)}-1)}\bigg)^{\frac{(1-\beta)(1+2(1-r))}{\beta}}(\sqrt{1+2(1-r)})^{\frac{1+2(1-r)}{\beta}}.\label{eq:Kdef}
\end{align}

\paragraph{Case $0<\tau=\beta<1$} In this case, the identity~\eqref{eq:thetahatref} can be estimated directly as
\begin{align*}
\widehat \Theta(h)&=\frac{2}{\pi^2c_{\beta}h^2}\int_0^{1/2}\sin^2(\pi h t)\exp\bigg(\frac{1+\theta\beta}{\beta}|\Phi_{\beta}^{-1}(t)|^\beta\bigg)\,{\rm d}t\\
&\leq \frac{2H_{\beta,\varepsilon}^{1+\beta\theta}}{\pi^2c_{\beta}h^2}\int_0^{1/2}\sin^2(\pi h t)t^{-(1+\theta\beta)(1+\varepsilon)}\,{\rm d}t\\
&\leq \frac{4H_{\beta,\varepsilon}^{1+\beta\theta}\pi^{\beta\theta+\varepsilon\beta\theta+\varepsilon-2}}{c_{\beta}(2-\beta\theta-\varepsilon\beta\theta-\varepsilon)(\beta\theta+\varepsilon\beta\theta+\varepsilon)}h^{-2\big(1-\frac{\beta\theta+\varepsilon\beta\theta+\varepsilon}{2}\big)}.
\end{align*}
We choose~$r = 1 - \frac{\beta \theta + \varepsilon \beta \theta + \varepsilon}{2}$. We can achieve~$r \in \left(\frac{1}{2},1\right)$ since~$\varepsilon \in (0,1)$ is arbitrarily small and~$0<\theta<\frac{1}{\beta}$. The conclusion holds with
\begin{align}\label{eq:newkdef2}K=\frac{4\pi^{2r}}{c_{\beta}(2r)(2-2r)}\frac{1}{(2\Gamma(\frac{1}{\beta}))^{3-2r}}\bigg(\frac{1-\beta}{{\rm e}\beta}\frac{1+\beta\theta}{3-2r - 1 - \beta\theta}\bigg)^{\frac{(1-\beta)(3-2r)}{\beta}}\left(\frac{3-2r}{1+\beta\theta}\right)^{\frac{3-2r}{\beta}}.
\end{align}

\paragraph{Case $0<\tau<1\le\beta$} 

For~$\beta \geq 1$, it can be shown (see \cite[Equation~5.5]{hks21}) that 
\begin{align}
\exp\left( \frac{|\Phi_{\beta}^{-1}(t)|^{\beta}}{\beta} \right) \le \frac{1}{t} \quad \quad\text{for all}~t \in (0,\tfrac{1}{2}).\label{eq:ge-one-bound}
\end{align}
As in the previous case, since~$\beta > \tau$, Young's inequality~\eqref{eq:young} %
leads to the upper bound~\eqref{eq:Thetabound} with~$H_{\beta,\varepsilon} = 1$.
Thus, choosing~$r$ as in~\eqref{eq:rdef}, the conclusion holds with 
\begin{align}
K&=\frac{4\pi^{-2r}}{(2r)(2-2r)c_{\beta}}\exp\bigg(\bigg(\frac{\sqrt{1+2(1-r)}-1}{\tau}\bigg)^{1-\frac{\beta}{\beta-\tau}}\frac{\beta-\tau}{\beta}\theta^{\frac{\beta}{\beta-\tau}}\bigg).\label{eq:K-thirdcase}
\end{align}

\paragraph{Case $1 \le \tau=\beta$} In this case, the identity~\eqref{eq:thetahatref} can be estimated directly as
\begin{align*}
\widehat \Theta(h)&=\frac{2}{\pi^2c_{\beta}h^2}\int_0^{1/2}\sin^2(\pi h t)\exp\bigg(\frac{1+\theta\beta}{\beta}|\Phi_{\beta}^{-1}(t)|^\beta\bigg)\,{\rm d}t\\
&\leq \frac{4\pi^{\beta\theta-2}}{c_{\beta}(2-\beta\theta)\beta\theta}h^{-2\big(1-\frac{\beta\theta}{2}\big)},
\end{align*}
using~\eqref{eq:ge-one-bound}  and~\cite[Lemma~3]{KSWW10}, which additionally imposes the constraint $0<\theta<\frac{1}{\beta}$. Thus, choosing~$r = 1 - \frac{\beta \theta }{2}$, the conclusion holds with
\begin{align}\label{eq:newkdeffinal}K=\frac{4\pi^{-2r}}{c_{\beta}(2r)(2-2r)}.
\end{align}

\paragraph{Case $1\le \tau<\beta$} Choosing~$r$ as in~\eqref{eq:rdef}, using Young's inequality it can be shown that the conclusion holds with~$K$ as in~\eqref{eq:K-thirdcase}.\quad\endproof

We obtain the following as a corollary (compare with~\cite[Theorem~6.1]{hks21}).

\begin{theorem}\label{thm:weights}
Let $u_s$ denote the dimensionally-truncated PDE solution $u$, which satisfies assumptions~{\rm \ref{assump2}},~{\rm \ref{assump3}}, and~{\rm \ref{assump5}}. If $\beta\neq \tau$, we choose $\theta\in(2\|\bsalpha\|_{\infty},\infty)$; if $\beta=\tau$, we additionally require $\|\bsalpha\|_{\infty}<\frac{1}{2\beta}$ and take $\theta\in(2\|\bsalpha\|_{\infty},\frac{1}{\beta})$. Define the product and order dependent (POD) weights by setting
\begin{align*}
\gamma_{\setu}=\bigg((|\setu|!)^\sigma \prod_{j\in\setu}\frac{ (C+1)b_j}{(\theta-2\alpha_j)^{\frac{1}{2\tau}}\sqrt{K^\lambda \zeta(2r\lambda)\,\Gamma(1+\frac{1}{\tau})}}\bigg)^{\frac{2}{1+\lambda}},\quad \setu\subseteq\{1:s\},
\end{align*}
where $K$ is specified in Theorem~\ref{thm:heavyqmc} and, if $\beta\neq \tau$,
\begin{align*}
\lambda=\begin{cases}
\frac{p}{2-p}&\text{if}~\frac23<p<\frac{1}{\sigma},\\
\frac{1}{2-2\delta}~\text{for arbitrary}~\delta\in(0,\frac12)&\text{if}~0<p\leq \min\{\frac23,\frac{1}{\sigma}\},~p\neq \frac{1}{\sigma}.
\end{cases}
\end{align*}
Otherwise, if $\beta=\tau,$ \begin{align*}
\lambda=\begin{cases}
\frac{p}{2-p}\quad\text{provided that additionally}~\theta<\frac{3p-2}{p\beta}&\text{if}~\frac23<p<\frac{1}{\sigma},\\
\frac{1}{2-\theta\beta-2\delta}~\text{for arbitrary}~\delta\in(0,\frac12-\frac12\theta\beta)&\text{if}~0<p\leq \min\{\frac23,\frac{1}{\sigma}\},~p\neq \frac{1}{\sigma}.
\end{cases}
\end{align*}
 Then a randomly shifted lattice rule with $n\geq 2$ points in $s$ dimensions can be constructed by a CBC algorithm for the integrand $F=G\circ u_s$  such that 
\begin{align*}
\sqrt{\mathbb E_{\boldsymbol\Delta}|\mathbb E(F)-Q_{s,n}^{\boldsymbol\Delta}(F)|^2}\leq C' \begin{cases}
    \phi(n)^{-\min\{1/p-1/2,1-\delta\}}, &\text{if}~\tau <\beta,\\
    \phi(n)^{-\min\{1/p-1/2,1-\frac{\theta\beta}{2}-\delta\}}, &\text{if}~\tau =\beta,
\end{cases}
\end{align*}
where $C'>0$ is independent of the dimension $s$.
\end{theorem}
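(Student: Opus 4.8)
The plan is to apply Theorem~\ref{thm:heavyqmc} to the integrand $F = G\circ u_s$ and then choose the POD weights so as to simultaneously minimize the QMC error bound and control the resulting norm $\|F\|_{s,\bsgamma}$. First I would verify that $F$ lies in the special weighted space: the mixed partial derivatives $\partial^{\setu} F(\bsy) = \langle G, \partial^{\setu} u_s(\cdot,\bsy)\rangle$ are bounded using Theorem~\ref{thm:weak-rand} together with assumption~\ref{assump5}, which gives $|\partial^{\setu} F(\bsy)| \le \frac{\|f\|_{H^{-1}(D)}\|G\|_{H^{-1}(D)}}{c}(C+1)^{|\setu|}(|\setu|!)^\sigma \bsb^{\setu}\exp(\sum_{j\ge1}\alpha_j|y_j|^\tau)$, using that $\bsnu$ with entries in $\{0,1\}$ on $\setu$ has $\bsnu! = 1$ and $|\bsnu| = |\setu|$. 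Substituting into the definition of $\|F\|_{s,\bsgamma}^2$, the $\setu$-term factorizes: the inner integral over $\bsy_{-\setu}$ of $\exp(\sum_{j\in-\setu}\alpha_j|y_j|^\tau)\varphi_\beta(y_j)$ is a finite product bounded as in Proposition~\ref{prop:amin}, and the remaining integral over $\bsy_\setu$ of $\exp(2\sum_{j\in\setu}\alpha_j|y_j|^\tau)\psi^2(y_j) = \exp(-(\theta - 2\alpha_j)|y_j|^\tau)$ evaluates to $\prod_{j\in\setu} \frac{2\Gamma(1+1/\tau)}{(\theta-2\alpha_j)^{1/\tau}}$ (requiring $\theta > 2\|\bsalpha\|_\infty$, and in the case $\beta=\tau$ also $\theta<1/\beta$, which is where the constraints on $\theta$ come from). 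This yields $\|F\|_{s,\bsgamma}^2 \le \widetilde C \sum_{\setu\subseteq\{1:s\}} \frac{1}{\gamma_\setu}\big((|\setu|!)^\sigma \prod_{j\in\setu} (C+1)b_j\big)^2 \prod_{j\in\setu}\frac{2\Gamma(1+1/\tau)}{(\theta-2\alpha_j)^{1/\tau}}$ with $\widetilde C$ independent of $s$.

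Next I would substitute this bound and the error bound from Theorem~\ref{thm:heavyqmc} into the product $\big(\frac{1}{\phi(n)}\sum_{\varnothing\neq\setu}\gamma_\setu^\lambda K^{\lambda|\setu|}(2\zeta(2r\lambda))^{|\setu|}\big)^{1/(2\lambda)}\|F\|_{s,\bsgamma}$, and optimize the weights. The standard trick (as in~\cite{hks21, kss12, Kuo2016ApplicationOQ}) is to choose $\gamma_\setu$ to balance the two sums, which leads precisely to the stated $\gamma_\setu = \big((|\setu|!)^\sigma \prod_{j\in\setu} \frac{(C+1)b_j}{(\theta-2\alpha_j)^{1/(2\tau)}\sqrt{K^\lambda \zeta(2r\lambda)\Gamma(1+1/\tau)}}\big)^{2/(1+\lambda)}$: with this choice both sums become $\sum_\setu \big(\text{same base}\big)^{2\lambda/(1+\lambda)}$ up to constants, and the whole bound collapses to $C' \phi(n)^{-1/(2\lambda)}$ times a factor $\big(\sum_{\varnothing\neq\setu}\big(b_\setu\text{-type term}\big)\big)$ that must be shown to be bounded uniformly in $s$. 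The $s$-independence of this sum is the crucial point: one expands $\sum_{\setu}(|\setu|!)^{2\sigma\lambda/(1+\lambda)}\prod_{j\in\setu}\widetilde b_j^{\,2\lambda/(1+\lambda)} = \sum_{\ell\ge0}(\ell!)^{2\sigma\lambda/(1+\lambda)}\sum_{|\setu|=\ell}\prod_{j\in\setu}\widetilde b_j^{\,2\lambda/(1+\lambda)}$, bounds the inner sum by $\frac{1}{\ell!}\big(\sum_j \widetilde b_j^{\,2\lambda/(1+\lambda)}\big)^\ell$, and then requires the series $\sum_\ell (\ell!)^{2\sigma\lambda/(1+\lambda)-1}\big(\sum_j\widetilde b_j^{2\lambda/(1+\lambda)}\big)^\ell$ to converge; this forces $\frac{2\sigma\lambda}{1+\lambda}\le 1$, i.e.\ $\lambda \le \frac{1}{2\sigma - 1}$, and that $\sum_j \widetilde b_j^{2\lambda/(1+\lambda)} < \infty$, i.e.\ $\frac{2\lambda}{1+\lambda} \ge p$ (using $\bsb\in\ell^p$ from~\ref{assump3} and that $(\theta-2\alpha_j)^{-1/(2\tau)}$ is bounded in $j$), which gives $\lambda \ge \frac{p}{2-p}$; moreover, when $\frac{2\sigma\lambda}{1+\lambda}=1$ exactly, the geometric factor must be $<1$, i.e.\ $\sum_j\widetilde b_j^{p}<1$ — but one can arrange this by the standard rescaling argument.

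Finally, to get the stated exponents I would choose $\lambda$ as small as allowed so as to maximize $\frac{1}{2\lambda}$, subject to $\frac{p}{2-p}\le\lambda\le 1$, $\lambda>\frac{1}{2r}$, and $\lambda\le\frac{1}{2\sigma-1}$ (the last being the $\sigma$-regularity constraint). When $\frac23 < p < \frac1\sigma$ one takes $\lambda = \frac{p}{2-p}$, giving $\frac{1}{2\lambda} = \frac1p - \frac12$; when $0 < p \le \min\{\frac23,\frac1\sigma\}$ one instead takes $\lambda$ slightly above $\frac1{2r}$, i.e.\ $\lambda = \frac{1}{2-2\delta}$ for small $\delta$ (respectively $\lambda=\frac{1}{2-\theta\beta-2\delta}$ in the case $\tau=\beta$, since there $r < 1-\frac{\theta\beta}{2}$), giving the rate $1-\delta$ (resp.\ $1-\frac{\theta\beta}{2}-\delta$); the minimum of the two competing rates is exactly what appears in the statement. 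The case distinction $\tau<\beta$ versus $\tau=\beta$ enters only through the admissible range of $r$ (arbitrary in $(\frac12,1)$ versus $(\frac12,1-\frac{\theta\beta}{2})$), which caps how small $\lambda$ can be in the small-$p$ regime; the extra condition $\theta<\frac{3p-2}{p\beta}$ in the $\tau=\beta$, $\frac23<p<\frac1\sigma$ case is needed to ensure $\lambda=\frac{p}{2-p}$ still satisfies $\lambda > \frac{1}{2r}$ with $r < 1-\frac{\theta\beta}{2}$. I expect the main obstacle to be bookkeeping the interplay between the three constraints on $\lambda$ across all the sub-cases and verifying that the borderline $\frac{2\sigma\lambda}{1+\lambda}=1$ summability is handled by rescaling $\bsb$; the rest is a direct adaptation of the weighted-space argument of~\cite[Theorem~6.1]{hks21}.
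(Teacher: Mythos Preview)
Your proposal is correct and follows essentially the same route as the paper: invoke the parametric regularity bound from Theorem~\ref{thm:weak-rand}, insert it into the weighted Sobolev norm, factorize the resulting integrals over $\setu$ and $-\setu$ (using Proposition~\ref{prop:amin} for the latter and the explicit Gamma-integral for the former), optimize the weights in the standard way \`a la~\cite{kss12,hks21}, and finally verify dimension-independence of the constant via the $\sum_{|\setu|=\ell}\prod_{j\in\setu}c_j\le\frac{1}{\ell!}(\sum_j c_j)^\ell$ trick together with the ratio test and the case analysis on~$\lambda$. A minor bookkeeping note: your evaluation $\int_{\mathbb R}e^{-(\theta-2\alpha_j)|y|^\tau}\,{\rm d}y=\frac{2\Gamma(1+1/\tau)}{(\theta-2\alpha_j)^{1/\tau}}$ places $\Gamma(1+1/\tau)$ in the numerator, whereas the stated weight formula has it in the denominator---this is a harmless constant discrepancy that does not affect the argument or the rate.
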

\proof By Theorem~\ref{thm:weak-rand}, there holds $|\partial^{\bsnu}F(\bsy)|\lesssim (C+1)^{|\bsnu|}(|\bsnu|!)^\sigma \bsb^{\bsnu}\prod_{j\geq 1}{\rm e}^{\alpha_j|y_j|^\tau }$. We begin by estimating the weighted Sobolev norm:
\begin{align*}
&\|F\|_{s,\bsgamma}^2= \sum_{\setu\subseteq\{1:s\}}\frac{1}{\gamma_{\setu}}\int_{\mathbb R^{|\setu|}}\bigg(\int_{\mathbb R^{s-|\setu|}}\partial_{\setu}F(\bsy)\bigg(\prod_{j\in-\setu}\varphi_{\beta}(y_j)\bigg)\,{\rm d}\bsy_{-\setu}\bigg)^2\bigg(\prod_{j\in\setu}\psi_j^2(y_j)\bigg)\,{\rm d}\bsy_{\setu}\\
&\lesssim \!\!\sum_{\setu\subseteq\{1:s\}}\!\!\frac{(|\setu|!)^{2\sigma}(C+1)^{2|\setu|}\bsb_{\setu}^2}{\gamma_{\setu}}\bigg(\prod_{j\in-\setu}\int_{\mathbb R}{\rm e}^{2\alpha_j|y_j|^\tau}\varphi_\beta(y_j)\,{\rm d}y_j\bigg)\prod_{j\in\setu}\int_{\mathbb R}{\rm e}^{2\alpha_j |y_j|^\tau-\theta |y_j|^\tau}\,{\rm d}y_j.
\end{align*}
Here, we can use the identity
\begin{align*}
\int_{\mathbb R}{\rm e}^{2\alpha_j|y_j|^\tau-\theta|y_j|^\tau}\,{\rm d}y_j=\frac{2}{(\theta-2\alpha_j)^\frac{1}{\tau}\Gamma(1+\frac{1}{\tau})},\quad \theta>2\alpha_j,
\end{align*}
and for $\alpha_j<\frac{1}{4\tau}$ the upper bound
\begin{align*}
\int_{\mathbb R}{\rm e}^{2\alpha_j|y_j|^\tau}\varphi_{\beta}(y_j)\,{\rm d}y_j\leq \frac{{\rm e}^{\frac{\beta-\tau}{\beta}2\alpha_j}}{(1-2\tau\alpha_j)^{1/\beta}}\leq \exp\bigg(2\,\frac{\beta-\tau}{\beta}\alpha_j\bigg)\exp\bigg(\frac{1}{\beta}\frac{2\tau\alpha_j}{1-2\tau\alpha_j}\bigg);
\end{align*}
see proof of Proposition~\ref{prop:amin} in the special case $q=2$. 
Let $j'\in\mathbb R$ be an index such that $\alpha_j<\frac{1}{4\tau}$ for all $j>j'$. Moreover, define $C':=\prod_{j=1}^{j'}\int_{\mathbb R}{\rm e}^{2\alpha_j|y_j|}\varphi_\beta(y_j)\,{\rm d}y_j<\infty$. Then
\begin{align*}
\|F\|_{s,\bsgamma}^2 &\lesssim (C')^2\sum_{\setu\subseteq\{1:s\}}\frac{(|\setu|!)^{2\sigma}\bsb_{\setu}^2}{\gamma_{\setu}}\prod_{j\in\setu}\frac{2(C+1)^2}{(\theta-2\alpha_j)^{\frac{1}{\tau}}\Gamma(1+\frac{1}{\tau})}\\
&\quad\quad\quad\quad \times \prod_{j\in\{1:s\}\setminus(\setu\cup \{1:j'\})}\exp\bigg(2\,\frac{\beta-\tau}{\beta}\alpha_j\bigg)\exp\bigg(\frac{1}{\beta}\frac{2\tau\alpha_j}{1-2\tau\alpha_j}\bigg)\\
&\leq (C')^2\exp\bigg(2\,\frac{\beta-\tau}{\beta}\sum_{j> j'}\alpha_j + \frac{1}{\beta}\sum_{j> j'}\frac{2\tau\alpha_j}{1-2\tau \alpha_j}\bigg)\\&\quad\quad\quad\quad\times\sum_{\setu\subseteq\{1:s\}}\frac{(|\setu|!)^{2\sigma}\boldsymbol b_{\setu}^2}{\gamma_{\setu}}\prod_{j\in\setu}\frac{2(C+1)^2}{(\theta-2\alpha_j)^{\frac{1}{\tau}}\Gamma(1+\frac{1}{\tau})}.
\end{align*}

The QMC R.M.S.~error can therefore be bounded by
\begin{align*}
{\rm R.M.S.~error}\lesssim \bigg(\frac{1}{\phi(n)}\bigg)^{\frac{1}{2\lambda}}C(\lambda,\boldsymbol\gamma,s),
\end{align*}
where the constant
\begin{align*}
C(\lambda,\boldsymbol\gamma,s)&:=\bigg(\sum_{\varnothing\neq \setu\subseteq\{1:s\}}\gamma_{\setu}^\lambda K^{\lambda |\setu|}(2\zeta(2r\lambda))^{|\setu|}\bigg)^{\frac{1}{2\lambda}}\\
&\quad\quad\times \bigg(\sum_{\setu\subseteq\{1:s\}}\frac{(|\setu|!)^{2\sigma}}{\gamma_{\setu}}\prod_{j\in\setu}\frac{2(C+1)^2b_j^2}{(\theta-2\alpha_j)^{\frac{1}{\tau}}\Gamma(1+\frac{1}{\tau})}\bigg)^{\frac12}
\end{align*}
can be minimized in complete analogy to~\cite{kss12} by choosing the weights $\gamma_{\varnothing}=1$ and
\begin{align*}
\gamma_{\setu}=\bigg((|\setu|!)^\sigma \prod_{j\in\setu}\frac{ (C+1)b_j}{(\theta-2\alpha_j)^{\frac{1}{2\tau}}\sqrt{K^\lambda \zeta(2r\lambda)\,\Gamma(1+\frac{1}{\tau})}}\bigg)^{\frac{2}{1+\lambda}},\quad \varnothing\neq\setu\subset \mathbb N.
\end{align*}
On the other hand, plugging these weights into the expression for $C(\lambda,\boldsymbol\gamma,s)$ yields
\begin{align*}
&[C(\lambda,\boldsymbol\gamma,s)]^{\frac{2\lambda}{1+\lambda}}\\
&=\sum_{\setu\subseteq\{1:s\}}2^{|\setu|}\zeta(2r\lambda)^{\frac{1}{1+\lambda}|\setu|}(|\setu|!)^{\frac{2\sigma\lambda}{1+\lambda}}K^{\frac{\lambda}{\lambda+1}|\setu|}\frac{[(C+1)^2]^{\frac{\lambda}{1+\lambda}|\setu|}}{(\theta-2\alpha_j)^{\frac{\lambda}{\tau(1+\lambda)}|\setu|}\Gamma(1+\frac{1}{\tau})^{\frac{\lambda}{1+\lambda}|\setu|}}\prod_{j\in\setu}b_j^{\frac{2\lambda}{1+\lambda}}\\
&=\sum_{\ell=0}^s2^\ell \zeta(2r\lambda)^{\frac{1}{1+\lambda}\ell}(\ell!)^{\frac{2\sigma\lambda}{1+\lambda}}K^{\frac{\lambda}{\lambda+1}\ell}\frac{[(C+1)^2]^{\frac{\lambda}{1+\lambda}\ell}}{(\theta-2\alpha_j)^{\frac{\lambda}{\tau(1+\lambda)}\ell}\Gamma(1+\frac{1}{\tau})^{\frac{\lambda}{1+\lambda}\ell}}\sum_{\substack{|\setu|=\ell\\ \setu\subseteq\{1:s\}}}\prod_{j\in\setu}b_j^{\frac{2\lambda}{1+\lambda}}\\
&\leq \sum_{\ell=0}^\infty 2^\ell \zeta(2r\lambda)^{\frac{1}{1+\lambda}\ell}(\ell!)^{\frac{2\sigma\lambda}{1+\lambda}}K^{\frac{\lambda}{\lambda+1}\ell}\frac{[(C+1)^2]^{\frac{\lambda}{1+\lambda}\ell}}{(\theta-2\alpha_j)^{\frac{\lambda}{\tau(1+\lambda)}\ell}\Gamma(1+\frac{1}{\tau})^{\frac{\lambda}{1+\lambda}\ell}}\sum_{\substack{|\setu|=\ell\\ \setu\subseteq\mathbb N}}\prod_{j\in\setu}b_j^{\frac{2\lambda}{1+\lambda}}\\
&\leq \sum_{\ell=0}^\infty 2^\ell \zeta(2r\lambda)^{\frac{1}{1+\lambda}\ell}(\ell!)^{\frac{2\sigma\lambda}{1+\lambda}-1}K^{\frac{\lambda}{\lambda+1}\ell}\frac{[(C+1)^2]^{\frac{\lambda}{1+\lambda}\ell}}{(\theta-2\alpha_j)^{\frac{\lambda}{\tau(1+\lambda)}\ell}\Gamma(1+\frac{1}{\tau})^{\frac{\lambda}{1+\lambda}\ell}}\bigg(\sum_{j\geq 1}b_j^{\frac{2\lambda}{1+\lambda}}\bigg)^\ell,
\end{align*}
where we used the inequality $\sum_{|\setu|=\ell,~\setu\subseteq\mathbb N}\prod_{j\in\setu}c_j\leq \frac{1}{\ell!}\big(\sum_{j\geq 1}c_j\big)^\ell$ for $c_j>0$ for all $j\geq 1$.

There are now two cases to consider:
\begin{enumerate}
\item If $\frac{2}{3}<p<\frac{1}{\sigma}$ and $\beta\neq\tau$, we take $\lambda=\frac{p}{2-p}$. Then $\lambda\in(\frac{1}{2},1)$ and the ratio test shows that $C(\lambda,\boldsymbol\gamma,s)$ can be bounded independently of $s$. If $\beta=\tau$, then we additionally need $\lambda\in(\frac{1}{2-\beta\theta-\varepsilon\beta\theta-\varepsilon},1)$ which can be accomplished by choosing $\theta<\frac{3p-2-\varepsilon p}{\beta p(1+\varepsilon)}$.
\item First we assume that $\beta>\tau$. If $p\in(0,\min\{\frac23,\frac{1}{\sigma}\})$, we take $\lambda=\frac{1}{2-2\delta}$ for arbitrary $\delta\in(0,\frac12)$. Then $\frac{2\lambda}{1+\lambda}=\frac{2}{3-2\delta}\in(\frac23,1)$ and we can use the inequality $\|\boldsymbol b\|_{\ell^{2\lambda/(1+\lambda)}}\leq \|\boldsymbol b\|_{\ell^p}$ to obtain
\begin{align*}
&[C(\lambda,\boldsymbol\gamma,s)]^{\frac{2\lambda}{1+\lambda}}\\
&\leq \sum_{\ell=0}^\infty 2^\ell \zeta(2r\lambda)^{\frac{1}{1+\lambda}\ell}(\ell!)^{\frac{2\sigma\lambda}{1+\lambda}-1}\frac{K^{\frac{\lambda}{\lambda+1}\ell}[(C+1)^2]^{\frac{\lambda}{1+\lambda}\ell}}{(\theta-2\alpha_j)^{\frac{\lambda}{\tau(1+\lambda)}\ell}\Gamma(1+\frac{1}{\tau})^{\frac{\lambda}{1+\lambda}\ell}}\bigg(\sum_{j\geq 1}b_j^{p}\bigg)^{\frac{2\lambda\ell}{(1+\lambda)p}}.
\end{align*}
An application of the ratio test shows that $C(\lambda,\boldsymbol\gamma,s)$ can be bounded independently of $s$ in this case as well. In the case~$\beta = \tau$ we take~$\lambda = \frac{1}{2 - \theta\beta - 2\delta}$ for arbitrary~$\delta \in (0,\frac{1}{2}-\frac{\theta\beta}{2})$. Then~$\frac{2\lambda}{1+\lambda} = \frac{2}{3-\theta\beta - 2\delta} \in (\frac{2}{3-\theta\beta},1)$ and we can proceed as above.
\end{enumerate}
This concludes the proof.\quad\endproof
\section{Total error}\label{sec:totalerror}

The overall error can be decomposed as~\eqref{eq:overallerror}, i.e., it consists of the \emph{dimension truncation} error, the \emph{spatial discretization error}, and the \emph{cubature error}. Choosing a finite element method for the spatial discretization and a randomly shifted rank-1 lattice rule as quadrature method, we shall derive a convergence result for the combined overall error~\eqref{eq:overallerror}. For this purpose, we denote by~$V_h$ be the space of continuous, piecewise linear functions on~$D$ satisfying the boundary condition. The space~$V_h$ is of dimension~$M_h <\infty$, where~$M_h$ is of order~$h^d$,~$d\in\{1,2,3\}$.

\begin{theorem}\label{thm:all}
Let the assumptions~{\rm \ref{assump2}}--{\rm \ref{assump8}} hold for some~$0<t\le 1$. Then, it is possible to generate a QMC rule using a CBC algorithm under the assumptions of~Theorem~\ref{thm:weights} satisfying the combined error estimate
\begin{align*}
&\sqrt{\mathbb E_{\boldsymbol\Delta}\|\mathbb E[u]-Q_{s,n}^{\boldsymbol\Delta}(u)\|_{H_0^1(D)}^2}
 \le C\! \left( s^{-\frac{2}{p}+1} \!+\!  h^{s'} \!\!+\! \phi(n)^{-\min\{1/p-1/2,1-\delta\}} \right),\!\hspace*{.55cm}\text{ if }\tau < \beta,\\%
 &\sqrt{\mathbb E_{\boldsymbol\Delta}\|\mathbb E[u]-Q_{s,n}^{\boldsymbol\Delta}(u)\|_{H_0^1(D)}^2}
 \le C\! \left( s^{-\frac{2}{p}+1} \!+ \! h^{s'} \!\!+\! \phi(n)^{-\min\{1/p-1/2,1-\frac{\theta\beta}{2} - \delta\}} \right), \!\text{ if }\tau = \beta
\end{align*}
for all~$0< s' < t$, where~$\phi(n)$ is the Euler totient function and the constant $C>0$ is independent of $s$, $h$, and $n$. If assumption~{\rm \ref{assump7}} holds with~$t=1$ the result holds with~$s' = 1$.
\end{theorem}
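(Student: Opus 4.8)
\proof
The plan is to bound the three contributions in the splitting~\eqref{eq:overallerror} separately, reading $Q_{s,n}^{\boldsymbol\Delta}$ as acting on the dimensionally-truncated finite element solution $u_{s,h}(\cdot,\bsy)\in V_h$, i.e.\ the Galerkin approximation in $V_h$ of the weak form of~\eqref{eq:parametricpde2}. Since the first two error contributions in~\eqref{eq:overallerror} do not depend on the random shift $\boldsymbol\Delta$, Minkowski's inequality for the $L^2$-norm over $\boldsymbol\Delta$ gives
\begin{align*}
\sqrt{\mathbb E_{\boldsymbol\Delta}\|\mathbb E[u]-Q_{s,n}^{\boldsymbol\Delta}(u_h)\|_{H_0^1(D)}^2}
&\le\|(I-I_s)(u)\|_{H_0^1(D)}+\|I_s(u-u_h)\|_{H_0^1(D)}\\
&\quad+\sqrt{\mathbb E_{\boldsymbol\Delta}\|(I_s-Q_{s,n}^{\boldsymbol\Delta})(u_h)\|_{H_0^1(D)}^2},
\end{align*}
so it remains to bound the three terms by $s^{-2/p+1}$, $h^{s'}$, and the asserted power of $\phi(n)$, with constants independent of $s,h,n$. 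The first term is handled at once by Theorem~\ref{thm:dimtrunc} in its $H_0^1(D)$-valued form (i.e.\ with $G$ ranging over the unit ball of $H^{-1}(D)$), which is admissible because $\mathbb E[u]=I(u)$ by Lemma~\ref{lem:domain_change}.

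For the cubature term, the key preliminary observation is that the parametric regularity bound~\eqref{eq:reg2} of Theorem~\ref{thm:weak-rand} holds verbatim for the Galerkin solution $u_{s,h}$ with the same constants: the a priori bound $\|a_s(\cdot,\bsy)^{1/2}\nabla u_{s,h}(\cdot,\bsy)\|_{L^2(D)}\le\|f\|_{H^{-1}(D)}/a_{\min}(\bsy)^{1/2}$ and the recurrence obtained by differentiating the Galerkin identity only involve test functions from the $\bsy$-independent subspace $V_h\subset H_0^1(D)$, so Lemma~\ref{lemma:easy} applies unchanged. The randomly shifted lattice rule error analysis underlying Theorems~\ref{thm:heavyqmc}--\ref{thm:weights} extends without change to $H_0^1(D)$-valued integrands (the shift-averaged worst-case error of the rule is unchanged; the weighted Sobolev norm $\|\cdot\|_{s,\bsgamma}$ is replaced by its $H_0^1(D)$-valued analogue). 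Applying it to $\bsy\mapsto u_{s,h}(\cdot,\bsy)$ with the POD weights of Theorem~\ref{thm:weights} and estimating the $H_0^1(D)$-valued weighted norm exactly as in the proof of Theorem~\ref{thm:weights}, but starting from~\eqref{eq:reg2} for $u_{s,h}$ rather than from the bound for $G\circ u_s$, yields
\begin{align*}
\sqrt{\mathbb E_{\boldsymbol\Delta}\|(I_s-Q_{s,n}^{\boldsymbol\Delta})(u_h)\|_{H_0^1(D)}^2}\le C'\begin{cases}\phi(n)^{-\min\{1/p-1/2,\,1-\delta\}},&\tau<\beta,\\[1mm]\phi(n)^{-\min\{1/p-1/2,\,1-\frac{\theta\beta}{2}-\delta\}},&\tau=\beta,\end{cases}
\end{align*}
with $C'$ independent of $s$ and $h$.

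For the finite element term I would push the $H_0^1(D)$-norm inside the Bochner integral,
\begin{align*}
\|I_s(u-u_h)\|_{H_0^1(D)}\le\int_{\mathbb R^s}\|u_s(\cdot,\bsy)-u_{s,h}(\cdot,\bsy)\|_{H_0^1(D)}\prod_{j=1}^s\varphi_\beta(y_j)\,\rd\bsy,
\end{align*}
and invoke, for a.e.\ fixed $\bsy\in\mathbb R^s$, the standard C\'ea/duality argument: since $D$ is a convex polyhedron (assumption~\ref{assump8}) and $a_s(\cdot,\bsy)\in C^{0,t}(\overline{D})$, $f\in H^{t-1}(D)$ (assumption~\ref{assump7}), the truncated solution lies in $H^{1+s'}(D)$ for every $0<s'<t$ (and in $H^2(D)$, so $s'=1$, when $t=1$), and piecewise linear finite elements yield $\|u_s(\cdot,\bsy)-u_{s,h}(\cdot,\bsy)\|_{H_0^1(D)}\lesssim h^{s'}\,a_{\max}(\bsy)^{\kappa_1}\,a_{\min}(\bsy)^{-\kappa_2}\,\|f\|_{H^{t-1}(D)}$ for fixed exponents $\kappa_1,\kappa_2>0$, where one uses $a_{s,\min}(\bsy)\ge a_{\min}(\bsy)$ and that the $C^{0,t}(\overline{D})$-norm of $a_s(\cdot,\bsy)$ is dominated, up to $s$-independent constants, by $a_{\max}(\bsy)$ (cf.~\cite{matern1}). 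Integrating and applying H\"older's inequality with conjugate exponents $q_1,q_2$ then gives $\|I_s(u-u_h)\|_{H_0^1(D)}\lesssim h^{s'}\,\|a_{\max}^{\kappa_1}\|_{L^{q_1}_{\bsmu_\beta}}\,\|a_{\min}^{-\kappa_2}\|_{L^{q_2}_{\bsmu_\beta}}\,\|f\|_{H^{t-1}(D)}$, and both norms are finite: the first by assumption~\ref{assump7} together with the remark on $a_{\max}$ following the assumptions, and the second by Proposition~\ref{prop:amin}.

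The main obstacle is not any isolated estimate but the two places where more than bookkeeping is required. First, one must verify carefully that Theorem~\ref{thm:weak-rand}, and therefore the construction in Theorem~\ref{thm:weights}, carries over to the Galerkin solution $u_{s,h}$ with $h$-uniform constants, and that the scalar CBC error bound lifts to $H_0^1(D)$-valued integrands. Second, in the borderline case $\tau=\beta$ Proposition~\ref{prop:amin} only provides $1/a_{\min}\in L^{q}_{\bsmu_\beta}$ for $q<1/(\beta\|\bsalpha\|_\infty)$, so the finite element error must be arranged so that the exponent $\kappa_2$ together with the chosen $q_2$ satisfies $\kappa_2 q_2<1/(\beta\|\bsalpha\|_\infty)$; this is exactly where the standing constraint $\|\bsalpha\|_\infty<\frac{1}{2\beta}$ from Theorem~\ref{thm:weights} is used, possibly in combination with a sharper finite element bound than the crude one above. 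Summing the three contributions then yields the claimed estimate, with $s'=1$ whenever assumption~\ref{assump7} holds with $t=1$.\endproof
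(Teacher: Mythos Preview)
Your proposal is correct and follows essentially the same route as the paper: the error decomposition~\eqref{eq:overallerror}, Theorem~\ref{thm:dimtrunc} for the truncation term, the lift of Theorems~\ref{thm:heavyqmc}--\ref{thm:weights} to $H_0^1(D)$-valued integrands (via the $h$-uniform regularity of $u_{s,h}$) for the cubature term, and a pathwise C\'ea/regularity estimate integrated against $\bsmu_\beta$ for the finite element term. The only difference is presentational: the paper delegates the last two steps to external references (\cite[Theorem~6.5]{GKKSS24} and \cite[Theorems~2.1--2.2]{Teckentrup2013}), whereas you unpack those arguments inline and correctly flag the integrability constraint on $1/a_{\min}$ in the borderline case $\tau=\beta$.
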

\proof
    The result follows from the error decomposition~\eqref{eq:overallerror} as follows: Theorem~\ref{thm:dimtrunc} is applied to the dimension truncation error. For the spatial discretization error~\cite[Theorems~2.1 and~2.2]{Teckentrup2013} are applied. Finally,~\cite[Theorem~6.5]{GKKSS24} is used to extend Theorems~\ref{thm:heavyqmc} and~\ref{thm:weights} to~$H_0^1(D)$-valued integrands.\quad\endproof

\section{Numerical experiments}\label{sec:numex}
Our theoretical framework covers not only lognormally distributed random inputs, but also generalized $\beta$-Gaussian distributions and Gevrey regular nonlinearities. The case where $a$ is lognormally parameterized has been considered in many papers such as \cite{log, log2, log3, log4, Kuo2016ApplicationOQ, KuoNuyens2018}, which include extensive numerical examples. The more general setup is demonstrated in the following numerical experiments. 
To this end, we fix the spatial domain $D=(0,1)^2$ for the PDE~\eqref{eq:pdemodel} equipped with an input random field given by~\eqref{eq:newadef}, 
where we set $\psi_j(\bsx)=0.5j^{-\vartheta}\sin(\pi j x_1)\sin(\pi j x_2)$, $\vartheta>1$, $f(\bsx)=x_2$, and $h(y)=\frac{y}{1+\sqrt{|y|}}$. Moreover, we choose
\begin{align*}
\xi(y)=\begin{cases}\exp(-y^{-2})&\text{if}~y\neq 0,\\ 0&\text{if}~y=0.\end{cases}
\end{align*}
It is a consequence of~\cite{mathstack} and~\cite[pg.~16]{qi1996general}, together with Stirling's approximation $\nu^\nu< \frac{\nu!\exp\big(\nu-\frac{1}{12\nu}+\frac{1}{360\nu^3}\big)}{\sqrt{2\pi \nu}}$, that
\begin{align*}
\bigg|\frac{{\rm d}^\nu}{{\rm d}z^{\nu}}\xi(z)\bigg|\leq 3^\nu(\nu!)^{3/2}\quad\text{for all}~z\in \mathbb R~\text{and}~\nu\in\mathbb N_0.
\end{align*}

In addition, we set~$\beta=\frac12$ for the distribution of the input $\beta$-Gaussian distribution and $\alpha_j=\|\psi_j\|_{L^\infty(D)}$. We assess the QMC error, dimension truncation error, and finite element discretization error numerically for this problem using two different values $\vartheta\in\{1.75,2.00\}$ for the decay rate of the parametric input. In all experiments, we use the CBC construction as detailed in~\cite{NicholsKuo14} with the input weights derived in Theorem~\ref{thm:weights} to obtain the $s$-dimensional QMC rules
\begin{align*}
Q^{(r)} u =  \frac{1}{n}\sum_{i=1}^n u(\cdot,\Phi_{\beta}^{-1}(\{\bst_i+\boldsymbol\Delta^{(r)}\})),\quad r\in\{1,\ldots,R\},
\end{align*}
where $\boldsymbol \Delta^{(1)},\ldots,\boldsymbol\Delta^{(R)}\overset{\rm i.i.d.}{\sim}\mathcal U([0,1]^s)$. The generating vector is obtained in all experiments using the parameters $\tau=\beta$, $\theta=1.001$, $r=0.70$, $\delta=0.05$. As our estimator of the expected value $\mathbb E[u]$, we take
\begin{align*}
\overline{Q}u=\frac{1}{R}\sum_{r=1}^R Q^{(r)}u.
\end{align*}

\begin{figure}[!t]
\centering
\begin{tikzpicture}
    \begin{loglogaxis}[
        width=.46\textwidth,
        height=.46\textwidth,
        xlabel={number of nodes $n$},
        ylabel={R.M.S.~error},
        legend pos=north east,
        grid=both,
        grid style={dash pattern=on 1pt off 1pt on 1pt off 1pt},
        xmin=10, xmax=100000,
        ymin=5e-8, ymax=1e-3,
        xtick={10,100,1000,10000,100000},
        ytick={1e-8,1e-7,1e-6,1e-5,1e-4,1e-3},
        legend entries={$H_0^1(D)$ error, $L^2(D)$ error}
    ]
    \addplot[
        color=blue,
        mark=* ,
        only marks
    ] coordinates {
        (17, 0.000326005)
        (31, 0.000266371)
        (67, 0.000161875)
        (127, 0.0000664711)
        (263, 0.0000346036)
        (503, 0.0000226511)
        (1013, 0.0000122649)
        (2003, 0.0000107224)
        (4003, 4.36918e-6)
        (8009, 3.79533e-6)
        (16007, 2.15499e-6)
        (32009, 1.37072e-6)
        (63997, 1.29591e-6)
    };
    
    \addplot[
        color=purple,
        mark=square*,
        only marks
    ] coordinates {
        (17, 0.0000514072)
        (31, 0.0000410483)
        (67, 0.0000182265)
        (127, 9.50792e-6)
        (263, 4.5225e-6)
        (503, 2.47737e-6)
        (1013, 1.21854e-6)
        (2003, 1.08122e-6)
        (4003, 4.42421e-7)
        (8009, 3.03237e-7)
        (16007, 1.53036e-7)
        (32009, 1.14162e-7)
        (63997, 9.48077e-8)
    };
    
    \addplot[
        color=blue,
        domain=17:63997,
        samples=100,
        dashed,
        line width=1pt
    ] { 0.00240776*x^(-0.720539)};
    
    \addplot[
        color=purple,
        domain=17:63997,
        samples=100,
        dashed,
        line width=1pt
    ] {0.000489024*x^(-0.815358)};
    
    \end{loglogaxis}
\end{tikzpicture}\begin{tikzpicture}
    \begin{loglogaxis}[
        width=.46\textwidth,
        height=.46\textwidth,
        xlabel={number of nodes $n$},
        ylabel={R.M.S.~error},
        legend pos=north east,
        grid=both,
        grid style={dash pattern=on 1pt off 1pt on 1pt off 1pt},
        xmin=10, xmax=100000,
        ymin=2e-8, ymax=1e-3,
        xtick={10,100,1000,10000,100000},
        ytick={1e-8,1e-7,1e-6,1e-5,1e-4,1e-3},
        legend entries={$H_0^1(D)$ error, $L^2(D)$ error}
    ]
    \addplot[
        color=blue,
        mark=* ,
        only marks
    ] coordinates {
        (17, 0.000318374)
        (31, 0.000222384)
        (67, 0.000095622)
        (127, 0.0000958866)
        (263, 0.0000427763)
        (503, 0.0000186633)
        (1013, 0.0000123024)
        (2003, 8.48447e-6)
        (4003, 4.95602e-6)
        (8009, 2.11487e-6)
        (16007, 1.71415e-6)
        (32009, 8.93827e-7)
        (63997, 5.71268e-7)
    };
    
    \addplot[
        color=purple,
        mark=square*,
        only marks
    ] coordinates {
        (17, 0.0000562378)
        (31, 0.0000352126)
        (67, 0.0000137663)
        (127, 0.0000119276)
        (263, 6.65799e-6)
        (503, 2.79086e-6)
        (1013, 1.49104e-6)
        (2003, 8.79013e-7)
        (4003, 4.69262e-7)
        (8009, 2.39471e-7)
        (16007, 1.90631e-7)
        (32009, 8.5124e-8)
        (63997, 4.14295e-8)
    };
    
    \addplot[
        color=blue,
        domain=17:63997,
        samples=100,
        dashed,
        line width=1pt
    ] { 0.00307305*x^(-0.783656)};
    
    \addplot[
        color=purple,
        domain=17:63997,
        samples=100,
        dashed,
        line width=1pt
    ] {0.000667005*x^(-0.8679)};
    
    \end{loglogaxis}
    \end{tikzpicture}\caption{Left: the obtained R.M.S.~errors for the experiment with $\vartheta=1.75$. The obtained least squares fits are $\mathcal O(n^{-0.72})$ and $\mathcal O(n^{-0.82})$ for the $H_0^1(D)$ and $L^2(D)$ errors, respectively. Right: the obtained R.M.S.~errors for the experiment with $\vartheta=2.00$. The obtained empirical convergence rates are $\mathcal O(n^{-0.78})$ and $\mathcal O(n^{-0.87})$ for the $H_0^1(D)$ and $L^2(D)$ errors, respectively.}\label{fig:1}
\end{figure}

We begin by assessing the QMC cubature error for the PDE problem. We fix the dimension $s=100$ and solve the PDE~\eqref{eq:pdemodel} using a first-order finite element method with mesh width $h=2^{-7}$. We consider the R.M.S.~errors
\begin{align*}
\sqrt{\frac{1}{R(R-1)}\sum_{r=1}^R \|\overline{Q}u-Q^{(r)}u\|_{H_0^1(D)}^2}\quad\text{and}\quad \sqrt{\frac{1}{R(R-1)}\sum_{r=1}^R \|\overline{Q}u-Q^{(r)}u\|_{L^2(D)}^2}
\end{align*}
with $R=16$ random shifts. The results are displayed in Figure~\ref{fig:1}. In all cases, we observe faster-than-Monte Carlo cubature convergence rates consistent with our theory. Specifically, Theorem~\ref{thm:heavyqmc} ensures that the theoretically expected QMC convergence rates are $\mathcal O(n^{-0.70})$ for both $\vartheta=1.75$ and $\vartheta=2.00$, while we observe the rates $\mathcal O(n^{-0.72})$ and $\mathcal O(n^{-0.78})$ for the $H_0^1(D)$ errors, respectively. We note that the observed rates $\mathcal O(n^{-0.82})$ and $\mathcal O(n^{-0.87})$ for the $L^2(D)$ errors corresponding to $\vartheta=1.75$ and $\vartheta=2.00$ are slightly better.

In order to analyze the dimension truncation error, we fix $h=2^{-7}$ as the mesh size and take the PDE solution corresponding to truncation dimension $s'=256$ as the reference solution. We consider the quantities
\begin{align*}
&\bigg\|\int_{\mathbb R^{s'}}u_{s',h}(\cdot,\bsy)\,\bsmu_{\beta}({\rm d}\bsy)-\int_{\mathbb R^{s}}u_{s,h}(\cdot,\bsy)\,\bsmu_{\beta}({\rm d}\bsy)\bigg\|_{H_0^1(D)}\\
&\text{and}\quad \bigg\|\int_{\mathbb R^{s'}}u_{s',h}(\cdot,\bsy)\,\bsmu_{\beta}({\rm d}\bsy)-\int_{\mathbb R^{s}}u_{s,h}(\cdot,\bsy)\,\bsmu_{\beta}({\rm d}\bsy)\bigg\|_{L^2(D)}
\end{align*}
for $s\in\{2,4,8,16,32,64\}$, where the high-dimensional integrals appearing inside the norms are approximated using lattice rules with $n=63\,997$ nodes subject to a single random shift. The results are displayed in Figure~\ref{fig:dimtrunc}.  In this case, the expected dimension truncation rates are $\mathcal O(s^{-2.5})$ and $\mathcal O(s^{-3.00})$ for $\vartheta=1.75$ and $\vartheta=2.00$, respectively. We observe the empirical rates $\mathcal O(s^{-2.34})$ and $\mathcal O(s^{-2.64})$ for the $H_0^1(D)$ dimension truncation errors corresponding to $\vartheta=1.75$ and $\vartheta=2.00$, while the respective empirical $L^2(D)$ dimension truncation rates are $\mathcal O(s^{-2.38})$ and $\mathcal O(s^{-2.84})$. The numerical results agree well with our theory, with the minor discrepancies potentially explained by the numerical integration and finite element errors present in the computations.

\begin{figure}[!t]
\centering
\begin{tikzpicture}
    \begin{loglogaxis}[
        width=.46\textwidth,
        height=.46\textwidth,
        xlabel={dimension $s$},
        ylabel={truncation error},
        legend pos=north east,
        grid=both,
        grid style={dash pattern=on 1pt off 1pt on 1pt off 1pt},
        xmin=1, xmax=100,
        ymin=1e-8, ymax=1e-3,
        xtick={1,10,100},
        ytick={1e-8,1e-7,1e-6,1e-5,1e-4,1e-3,1e-2},
        xticklabels={1,10,100},
        legend entries={$H_0^1(D)$ error, $L^2(D)$ error}
    ]
    \addplot[
        color=blue,
        mark=* ,
        only marks
    ] coordinates {
        (2, 0.000375987)
        (4, 0.0000749514)
        (8, 0.0000131155)
        (16, 2.58924e-6)
        (32, 4.98524e-7)
        (64, 1.25051e-7)
    };
    
    \addplot[
        color=purple,
        mark=square*,
        only marks
    ] coordinates {
        (2, 0.0000708481)
        (4, 0.0000146209)
        (8, 2.70178e-6)
        (16, 5.07331e-7)
        (32, 9.79461e-8)
        (64, 1.93011e-8)
    };
    
    \addplot[
         color=blue,
         domain=2:64,
         samples=100,
         dashed,
         line width=1pt
     ] {0.00181376*x^( -2.33734)};
    
     \addplot[
         color=purple,
         domain=2:64,
         samples=100,
         dashed,
         line width=1pt
     ] {0.000379031*x^(-2.37964)};
    
    \end{loglogaxis}
    \end{tikzpicture}\begin{tikzpicture}
    \begin{loglogaxis}[
        width=.46\textwidth,
        height=.46\textwidth,
        xlabel={dimension $s$},
        ylabel={truncation error},
        legend pos=north east,
        grid=both,
        grid style={dash pattern=on 1pt off 1pt on 1pt off 1pt},
        xmin=1, xmax=100,
        ymin=1e-9, ymax=1e-3,
        xtick={1,10,100},
        ytick={1e-9,1e-8,1e-7,1e-6,1e-5,1e-4,1e-3,1e-2},
        xticklabels={1,10,100},
        legend entries={$H_0^1(D)$ error, $L^2(D)$ error}
    ]
    \addplot[
        color=blue,
        mark=* ,
        only marks
    ] coordinates {
        (2, 0.000185212)
        (4, 0.0000263139)
        (8, 3.4757e-6)
        (16, 5.91883e-7)
        (32, 8.10375e-8)
        (64, 2.27752e-8)
    };
    
    \addplot[
        color=purple,
        mark=square*,
        only marks
    ] coordinates {
        (2, 0.0000344079)
        (4, 5.12595e-6)
        (8, 7.06334e-7)
        (16, 9.49538e-8)
        (32, 1.31309e-8)
        (64, 1.93304e-9)
    };
    
    \addplot[
         color=blue,
         domain=2:64,
         samples=100,
         dashed,
         line width=1pt
     ] {0.000992308*x^(-2.64372)};
    
     \addplot[
         color=purple,
         domain=2:64,
         samples=100,
         dashed,
         line width=1pt
     ] {0.000252765*x^(-2.83769)};
    
    \end{loglogaxis}
    \end{tikzpicture}\caption{Left: the obtained dimension truncation errors for the experiment with $\vartheta=1.75$. The obtained least squares fits are $\mathcal O(s^{-2.34})$ and $\mathcal O(s^{-2.38})$ for the $H_0^1(D)$ and $L^2(D)$ errors, respectively. Right: the obtained dimension truncation errors for the experiment with $\vartheta=2.00$. The obtained least squares fits are $\mathcal O(s^{-2.64})$ and $\mathcal O(s^{-2.84})$ for the $H_0^1(D)$ and $L^2(D)$ errors, respectively.}\label{fig:dimtrunc}
\begin{tikzpicture}
    \begin{loglogaxis}[
        width=.46\textwidth,
        height=.46\textwidth,
        xlabel={mesh width $h$},
        ylabel={discretization error},
        legend pos=south east,
        grid=both,
        grid style={dash pattern=on 1pt off 1pt on 1pt off 1pt},
        xmin=0.01, xmax=1,
        ymin=1e-5, ymax=0.1,
        xtick={1,0.1,0.01},
         xticklabels={$1^{\phantom{0}}$, $10^{-1}$, $10^{-2}$},
        ytick={1e-6,1e-5,1e-4,1e-3,1e-2,1e-1},
        minor xtick = {2e-2,3e-2,4e-2,5e-2,6e-2,7e-2,8e-2,9e-2,2e-1,3e-1,4e-1,5e-1,6e-1,7e-1,8e-1,9e-1},
        legend entries={$H_0^1(D)$ error, $L^2(D)$ error}
    ]
    \addplot[
        color=blue,
        mark=* ,
        only marks
    ] coordinates {
        (0.5, 0.0750393)
        (0.25, 0.0443396)
        (0.125, 0.0232939)
        (0.0625, 0.0117527)
        (0.03125, 0.00576524)
        (0.015625, 0.00258246)
    };
    
    \addplot[
        color=purple,
        mark=square*,
        only marks
    ] coordinates {
        (0.5, 0.0120797)
        (0.25, 0.00405564)
        (0.125, 0.00108698)
        (0.0625, 0.000275867)
        (0.03125, 0.0000669664)
        (0.015625, 0.0000137972)
    };
    
     \addplot[
         color=blue,
         domain=0.015625:0.5,
         samples=100,
         dashed,
         line width=1pt
     ] {0.16438*x^(0.974872)};
    
     \addplot[
         color=purple,
         domain=0.015625:0.5,
         samples=100,
         dashed,
         line width=1pt
     ] {0.056771*x^(1.96027)};
    
    \end{loglogaxis}
    \end{tikzpicture}\begin{tikzpicture}
    \begin{loglogaxis}[
        width=.46\textwidth,
        height=.46\textwidth,
        xlabel={mesh width $h$},
        ylabel={discretization error},
        legend pos=south east,
        grid=both,
        grid style={dash pattern=on 1pt off 1pt on 1pt off 1pt},
        xmin=0.01, xmax=1,
        ymin=1e-5, ymax=0.1,
        xtick={1,0.1,0.01},
        xticklabels={$1^{\phantom{0}}$, $10^{-1}$, $10^{-2}$},
        ytick={1e-5,1e-4,1e-3,1e-2,1e-1},
        minor xtick = {2e-2,3e-2,4e-2,5e-2,6e-2,7e-2,8e-2,9e-2,2e-1,3e-1,4e-1,5e-1,6e-1,7e-1,8e-1,9e-1},
        legend entries={$H_0^1(D)$ error, $L^2(D)$ error}
    ]
    \addplot[
        color=blue,
        mark=* ,
        only marks
    ] coordinates {
        (0.5, 0.0752405)
        (0.25, 0.0443627)
        (0.125, 0.0232952)
        (0.0625, 0.0117589)
        (0.03125, 0.00576801)
        (0.015625, 0.00258365)
    };
    
    \addplot[
        color=purple,
        mark=square*,
        only marks
    ] coordinates {
        (0.5, 0.0121017)
        (0.25, 0.00403319)
        (0.125, 0.0010822)
        (0.0625, 0.000274751)
        (0.03125, 0.000066531)
        (0.015625, 0.0000137023)
    };
    
     \addplot[
         color=blue,
         domain=0.015625:0.5,
         samples=100,
         dashed,
         line width=1pt
     ] {0.164686*x^(0.975315)};
    
     \addplot[
         color=purple,
         domain=0.015625:0.5,
         samples=100,
         dashed,
        line width=1pt
     ] {0.0567907*x^(1.96217)};
    
    \end{loglogaxis}
    \end{tikzpicture}\caption{Left: the obtained expected finite element discretization errors for the experiment with $\vartheta=1.75$. The obtained least squares fits are $\mathcal O(h^{0.97})$ and $\mathcal O(h^{1.96})$ for the $H_0^1(D)$ and $L^2(D)$ errors, respectively. Right: the obtained expected finite element discretization errors for the experiment with $\vartheta=2.00$. The obtained least squares fits are $\mathcal O(h^{0.98})$ and $\mathcal O(h^{1.96})$ for the $H_0^1(D)$ and $L^2(D)$ errors, respectively.}\label{fig:femfig}
\end{figure}

Finally, we assess the expected finite element error by fixing the truncation dimension $s=100$ and using the finite element solution corresponding to $h'=2^{-7}$ as the reference solution. We approximate the quantities
\begin{align*}
&\bigg\|\int_{\mathbb R^s}(u_{s,h'}(\cdot,\bsy)-u_{s,h}(\cdot,\bsy))\,\bsmu_{\beta}({\rm d}\bsy)\bigg\|_{H_0^1(D)}\\
&\text{and}\quad \bigg\|\int_{\mathbb R^s}(u_{s,h'}(\cdot,\bsy)-u_{s,h}(\cdot,\bsy))\,\bsmu_{\beta}({\rm d}\bsy)\bigg\|_{L^2(D)}
\end{align*}
by using a hierarchy of regular finite element meshes with $h=2^{-k}$, $k\in\{1,2,3,4,5,6\}$. The high-dimensional integrals appearing inside the norms were approximated by a QMC cubature rule using $n=63\,997$ nodes subject to a single random shift. The results are displayed in Figure~\ref{fig:femfig}. The observed finite element errors align nearly perfectly with the theoretical $H_0^1(D)$ error rate $\mathcal O(h)$ and $L^2(D)$ error rate $\mathcal O(h^2)$.

\section*{Conclusions}
In this paper, we have considered the application of QMC to the analysis of PDEs characterized by Gevrey regular, generalized Gaussian input uncertainty. The parametric regularity estimates yield dimension-independent QMC convergence rates for elliptic PDEs belonging to this class, extending the applicability of QMC methods to high-dimensional settings involving, e.g., fat-tailed input distributions. Moreover, our numerical experiments validate the sharpness of the derived error bounds.

\section*{Acknowledgements} The authors thank the anonymous referees for their comments, which helped improve this paper.
\section*{Appendix}
\renewcommand{\thesection}{A}
We require the following technical result for the numerical experiments.
\begin{lemma} Let $\bsnu\in\mathscr F$, $\bsnu\leq \mathbf 1$, and suppose that $\psi_j\in L^\infty(D)$ for $j\geq 1$ such that $(\|\psi_j\|_{L^\infty(D)})_{j\geq 1}\in\ell^1(\mathbb N)$. Furthermore, let $h\!:\mathbb R\to \mathbb R$ be a continuously differentiable function such that $\sup_{x\in\mathbb R}|h'(x)|\leq 1$ and let $\xi\!:\mathbb R\to\mathbb R$ be an infinitely many times continuously differentiable function satisfying the bound
\begin{align*}
\bigg|\frac{{\rm d}^k}{{\rm d}z^k}\xi(z)\bigg|\leq C_{\xi}^k (k!)^{\sigma}\quad\text{for all}~z\in \mathbb R~\text{and}~k\in\mathbb N_0
\end{align*}
for some $C_{\xi},\sigma\geq 1$. Define
\begin{align}\label{eq:newadef}
&a(\bsx,\bsy)=\exp\bigg(\sum_{j\geq 1}h(y_j)\psi_j(\bsx)\bigg)\exp\bigg(\xi\bigg(\sum_{j\geq 1}y_j\psi_j(\bsx)\bigg)\bigg),\quad \bsx\in D,~\bsy\in U_{\bsalpha,\tau}.
\end{align}
Then
\begin{align*}
\bigg\|\frac{\partial^{\bsnu}a(\cdot,\bsy)}{a(\cdot,\bsy)}\bigg\|_{L^\infty(D)}\leq \bigg(\frac{{\rm e}}{2}\bigg)^{\sigma}(|\bsnu|!)^{\sigma}\boldsymbol b^{\bsnu},
\end{align*}
where $\boldsymbol b=(b_j)_{j\geq 1}$ is defined by $b_j=2^{\sigma}C_{\xi}\|\psi_j\|_{L^\infty(D)}$ for $j\geq 1$.
\end{lemma}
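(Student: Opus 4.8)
The plan is to write $a={\rm e}^{\phi}$ with
\begin{align*}
\phi(\bsx,\bsy):=\sum_{j\ge1}h(y_j)\psi_j(\bsx)+\xi(g(\bsx,\bsy)),\qquad g(\bsx,\bsy):=\sum_{j\ge1}y_j\psi_j(\bsx),
\end{align*}
to work at a fixed $\bsx\in D$ and a fixed $\bsy$ at which these series converge (which, since $\bsalpha\in\ell^1(\mathbb N)$ forces $\sum_{j}\|\psi_j\|_{L^\infty(D)}|y_j|<\infty$ for $\bsmu_\beta$-a.e.\ $\bsy$ uniformly in $\bsx$, is the only case relevant for the subsequent integration), and then to suppress $\bsy$. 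Since $\bsnu\le\mathbf 1$, the operator $\partial^{\bsnu}$ is the mixed first-order derivative $\prod_{j\in S}\partial_{y_j}$ with $S:={\rm supp}(\bsnu)$ and $k:=|S|=|\bsnu|$. First I would expand $\partial^{\bsnu}a/a$ by the exponential (multivariate Fa\`a di Bruno) identity
\begin{align*}
\frac{\partial^{\bsnu}a}{a}=\sum_{\pi\in\Pi(S)}\ \prod_{B\in\pi}\partial^{B}\phi ,
\end{align*}
where $\Pi(S)$ is the set of set-partitions of $S$ and $\partial^{B}:=\prod_{j\in B}\partial_{y_j}$; then evaluate the block derivatives using the structure of $\phi$; and finally bound the sum over partitions.

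For the block derivatives: the summand $\sum_j h(y_j)\psi_j$ is separable in the coordinates, so $\partial^{B}(\sum_j h(y_j)\psi_j)$ equals $h'(y_j)\psi_j$ when $B=\{j\}$ and vanishes when $|B|\ge2$ --- this is where $\bsnu\le\mathbf 1$ is essential, since no derivative of $h$ beyond $h'$ occurs and $\sup|h'|\le1$ then suffices. The summand $\xi\circ g$ has a \emph{linear} inner function, so every mixed derivative $\partial^{A}g$ with $|A|\ge2$ vanishes and Fa\`a di Bruno for $\xi\circ g$ collapses to its all-singletons term, $\partial^{B}(\xi\circ g)=\xi^{(|B|)}(g)\prod_{j\in B}\psi_j$. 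Hence $|\partial^{\{j\}}\phi|=|(h'(y_j)+\xi'(g))\psi_j|\le(1+C_{\xi})\|\psi_j\|_{L^\infty(D)}$ and, for $|B|=m\ge2$, $|\partial^{B}\phi|\le C_{\xi}^{m}(m!)^{\sigma}\prod_{j\in B}\|\psi_j\|_{L^\infty(D)}$. With $b_j:=2^{\sigma}C_{\xi}\|\psi_j\|_{L^\infty(D)}$ and using $C_{\xi}\ge1$, $\sigma\ge1$ (so $1+C_{\xi}\le2^{\sigma}C_{\xi}$), both bounds are absorbed into $|\partial^{B}\phi|\le\theta_{|B|}\prod_{j\in B}b_j$ with $\theta_1=1$ and $\theta_m=(m!/2^{m})^{\sigma}$ for $m\ge2$; in every case $\theta_m\le(m!/2^{m-1})^{\sigma}$.

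Substituting into the exponential identity and using $\sum_i t_i^{\sigma}\le(\sum_i t_i)^{\sigma}$ for $t_i\ge0$, $\sigma\ge1$, I would obtain $|\partial^{\bsnu}a/a|\le\bsb^{\bsnu}R_k^{\sigma}$ with $R_k:=\sum_{\pi\in\Pi(S)}\prod_{B\in\pi}(|B|!/2^{|B|-1})$. Classifying a partition by the block containing a fixed element gives $R_k=\sum_{m=1}^{k}\binom{k-1}{m-1}(m!/2^{m-1})R_{k-m}$, $R_0=1$, and a strong induction yields $R_k\le k!$: indeed $\binom{k-1}{m-1}(k-m)!=(k-1)!/(m-1)!$ and $\sum_{m\ge1}m\,2^{-(m-1)}=4$, so the inductive step gives $R_k\le4(k-1)!\le k!$ for $k\ge4$, with $k\le3$ immediate. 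Thus $|\partial^{\bsnu}a/a|\le(k!)^{\sigma}\bsb^{\bsnu}=(|\bsnu|!)^{\sigma}\bsb^{\bsnu}\le({\rm e}/2)^{\sigma}(|\bsnu|!)^{\sigma}\bsb^{\bsnu}$, which is the claim. (Alternatively one can bound $\sum_{\pi}\prod_{B}\theta_{|B|}$ directly by $({\rm e}/2)^{\sigma}(k!)^{\sigma}$ by inducting on the same recursion and using $\binom{k}{m}\ge1$; the factor ${\rm e}/2$ then appears as the bound of a geometric-type series, which is presumably its origin in the statement.)

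The part I expect to be the main obstacle is recognizing that the exponential-formula expansion \emph{collapses} for this particular $\phi$: the separable part contributes only to size-one blocks (so one needs only $\sup|h'|\le1$, and only because $\bsnu\le\mathbf 1$, rather than a full Gevrey bound on $h$), while the composite part contributes only through its all-singletons refinement because $g$ is linear --- and then pinning down the correct factorial growth and clean constant in the resulting sum over set-partitions. The explicit block-derivative formulas, the elementary inequalities, and the induction for $R_k$ are then routine.
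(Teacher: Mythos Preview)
Your argument is correct and takes a genuinely different route from the paper's. The paper keeps the two exponential factors separate, applies the Leibniz product rule to $a=\exp(\phi_1)\exp(\phi_2)$, bounds $\partial^{\bsnu-\bsm}\exp(\phi_1)$ directly, and treats $\partial^{\bsm}\exp(\phi_2)$ via the recursive Fa\`a di Bruno sequence $(\tau_{\bsm,\lambda})$ whose estimate is imported from an external reference; the Leibniz sum is then closed using the Vandermonde convolution, and this is where the constant $({\rm e}/2)^\sigma$ arises. You instead combine the exponents, apply the set-partition (exponential-formula) version of Fa\`a di Bruno once, and exploit the two collapses you identify: the separable part contributes only to singleton blocks, and the linear inner function kills all but the all-singletons refinement for $\xi\circ g$. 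Your approach is self-contained (no external lemma) and in fact yields the sharper constant $1$ in place of $({\rm e}/2)^\sigma$; the price is that it relies more visibly on $\bsnu\le\mathbf 1$, whereas the paper's decomposition makes clear why the restriction can be dropped when $h(x)=x$. One small quibble: your parenthetical remark justifying convergence of the series via ``$\bsalpha\in\ell^1$'' conflates $\bsalpha$ with $(\|\psi_j\|_{L^\infty(D)})_{j\ge1}$, which are distinct hypotheses in the lemma; this does not affect the derivative computation.
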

\proof We begin by noting that the infinite series in~\eqref{eq:newadef} are summable since our assumption that $\sup_{x\in\mathbb R}|h'(x)|\leq 1$ implies $|h(x)|\leq |x|$ for all $x\in\mathbb R$ by the mean value theorem. The claim is trivially true if $\bsnu=\mathbf 0$, so we let $\bsnu\in\mathscr F\setminus\{\mathbf 0\}$ with $\bsnu\leq\mathbf 1$. By the Leibniz product rule, we obtain
\begin{align}
\partial^{\bsnu}a(\bsx,\bsy)=\sum_{\boldsymbol m\leq \bsnu}\binom{\bsnu}{\boldsymbol m}\partial^{\boldsymbol m}\exp\bigg(\xi\bigg(\sum_{j\geq 1}y_j\psi_j(\bsx)\bigg)\bigg)\partial^{\bsnu-\boldsymbol m}\exp\bigg(\sum_{j\geq 1}h(y_j)\psi_j(\bsx)\bigg).\label{sf0}
\end{align}
It is straightforward to estimate
\begin{align}
\bigg|\partial^{\bsnu-\boldsymbol m}\exp\bigg(\sum_{j\geq 1}h(y_j)\psi_j(\bsx)\bigg)\bigg|\leq \exp\bigg(\sum_{j\geq 1}h(y_j)\psi_j(\bsx)\bigg)\boldsymbol\rho^{\bsnu-\boldsymbol m},\label{sf1}
\end{align}
where $\boldsymbol\rho=(\rho_j)_{j\geq1}$ with $\rho_j=\|\psi_j\|_{L^\infty(D)}$. On the other hand, by Fa\`a di Bruno's formula (cf., e.g.,~\cite{savits}), there holds
\begin{align}\label{eq:fdib}
\partial^{\boldsymbol m}\exp\bigg(\xi\bigg(\sum_{j\geq 1}y_j\psi_j(\bsx)\bigg)\bigg)=\exp\bigg(\xi\bigg(\sum_{j\geq 1}y_j\psi_j(\bsx)\bigg)\bigg)\sum_{\lambda=1}^{|\boldsymbol m|}\tau_{\boldsymbol m,\lambda}(\bsx,\bsy),
\end{align}
where the sequence $(\tau_{\boldsymbol m,\lambda}(\bsx,\bsy))$ is defined recursively by
\begin{align*}
&\tau_{\boldsymbol m,0}\equiv \delta_{\boldsymbol m,\mathbf 0},\\
&\tau_{\boldsymbol m,\lambda}\equiv 0\quad\text{if}~|\boldsymbol m|<\lambda~\text{or}~\lambda<0,~\text{and}\\
&\tau_{\boldsymbol m+\boldsymbol e_j,\lambda}(\bsx,\bsy)=\sum_{\boldsymbol w\leq \boldsymbol m}\binom{\boldsymbol m}{\boldsymbol w}\partial^{\boldsymbol w+\boldsymbol e_j}\bigg(\xi\bigg(\sum_{j\geq 1}y_j\psi_j(\bsx)\bigg)\bigg)\tau_{\boldsymbol m-\boldsymbol w,\lambda-1}(\bsx,\bsy)
\end{align*}
otherwise. It is not difficult to see that
\begin{align*}
\partial^{\boldsymbol m}\xi\bigg(\sum_{j\geq 1}y_j\psi_j(\bsx)\bigg)=\frac{{\rm d}^{|\boldsymbol m|}}{{\rm d}z^{|\boldsymbol m|}}\xi(z)\bigg|_{z=\sum_{j\geq 1}y_j\psi_j(\bsx)}\prod_{j\in{\rm supp}(\boldsymbol m)}\psi_j(\bsx)^{m_j},
\end{align*}
which implies that
\begin{align*}
\sup_{\bsy\in U_{\bsalpha,\tau}}\!\|\tau_{\boldsymbol m+\boldsymbol e_j,\lambda}(\cdot,\bsy)\|_{L^\infty}\!\leq\! \!\sum_{\boldsymbol w\leq \boldsymbol m}\!\binom{\boldsymbol m}{\boldsymbol w}C_{\xi}^{|\boldsymbol w|+1}((|\boldsymbol w|\!+\!1)!)^{\sigma}\boldsymbol \rho^{\boldsymbol w}\!\!\sup_{\bsy\in U_{\bsalpha,\tau}}\!\!\!\|\tau_{\boldsymbol m-\boldsymbol w,\lambda-1}(\cdot,\bsy)\|_{L^\infty}.
\end{align*}
It is now a consequence of~\cite[Lemma~5.1]{ks24} that
\begin{align*}
\sup_{\bsy\in U_{\bsalpha,\tau}}\|\tau_{\boldsymbol m,\lambda}(\cdot,\bsy)\|_{L^\infty}\leq C_{\xi}^{|\boldsymbol m|}\bigg(\frac{|\boldsymbol m|!(|\boldsymbol m|-1)!}{\lambda!(|\boldsymbol m|-\lambda)!(\lambda-1)!}\bigg)^{\sigma}\boldsymbol\rho^{\boldsymbol m}.
\end{align*}
This implies that~\eqref{eq:fdib} can be bounded by
\begin{align}
&\bigg|\partial^{\boldsymbol m}\exp\bigg(\xi\bigg(\sum_{j\geq 1}y_j\psi_j(\bsx)\bigg)\bigg)\bigg|\notag\\
&\leq \exp\bigg(\xi\bigg(\sum_{j\geq 1}y_j\psi_j(\bsx)\bigg)\bigg) C_{\xi}^{|\boldsymbol m|}\boldsymbol\rho^{\boldsymbol m}\sum_{\lambda=1}^{|\boldsymbol m|}\bigg(\frac{|\boldsymbol m|!(|\boldsymbol m|-1)!}{\lambda!(|\boldsymbol m|-\lambda)!(\lambda-1)!}\bigg)^{\sigma}\notag\\
&\leq \exp\bigg(\xi\bigg(\sum_{j\geq 1}y_j\psi_j(\bsx)\bigg)\bigg) C_{\xi}^{|\boldsymbol m|}\boldsymbol\rho^{\boldsymbol m}(|\boldsymbol m|!(|\boldsymbol m|-1)!)^{\sigma}\bigg(\sum_{\lambda=1}^{|\boldsymbol m|}\frac{1}{(|\boldsymbol m|-\lambda)!(\lambda-1)!}\bigg)^{\sigma}\notag\\
&= \exp\bigg(\xi\bigg(\sum_{j\geq 1}y_j\psi_j(\bsx)\bigg)\bigg) 2^{\sigma |\boldsymbol m|-\sigma}C_{\xi}^{|\boldsymbol m|}\boldsymbol\rho^{\boldsymbol m}(|\boldsymbol m|!)^{\sigma},\label{sf2}
\end{align}
where we used the inequality $\sum_k a_k\leq \big(\sum_k a_k^{1/\beta}\big)^\beta$ for $a_k\geq 0$ and $\beta\geq 1$ and the identity $\sum_{\lambda=1}^m \frac{1}{(m-\lambda)!(\lambda-1)!}=\frac{2^{m-1}}{(m-1)!}$ (see~\cite[Lemma~A.1]{ks24}). Plugging~\eqref{sf1} and~\eqref{sf2} into~\eqref{sf0} yields
\begin{align*}
\bigg\|\frac{\partial^{\bsnu}a(\cdot,\bsy)}{a(\cdot,\bsy)}\bigg\|_{L^\infty(D)}\leq 2^{\sigma |\bsnu|-\sigma}C_{\xi}^{|\bsnu|}\boldsymbol\rho^{\bsnu}\sum_{\boldsymbol m\leq\bsnu}\binom{\bsnu}{\boldsymbol m}(|\boldsymbol m|!)^{\sigma}.
\end{align*}
Finally, we can estimate using the Vandermonde convolution that
\begin{align*}
&\sum_{\boldsymbol m\leq\bsnu}\binom{\bsnu}{\boldsymbol m}(|\boldsymbol m|!)^{\sigma}=\sum_{\ell=0}^{|\bsnu|}(\ell!)^{\sigma}\sum_{\substack{|\boldsymbol m|=\ell\\ \boldsymbol m\leq \bsnu}}\binom{\bsnu}{\boldsymbol m}\leq \sum_{\ell=0}^{|\bsnu|}(\ell!)^{\sigma}\sum_{\substack{|\boldsymbol m|=\ell\\ \boldsymbol m\leq \bsnu}}\binom{\bsnu}{\boldsymbol m}^{\sigma}\\
&\leq \bigg(\sum_{\ell=0}^{|\bsnu|}\ell!\sum_{\substack{|\boldsymbol m|=\ell\\ \boldsymbol m\leq \bsnu}}\binom{\bsnu}{\boldsymbol m}\bigg)^{\sigma}
\leq (|\bsnu|!)^{\sigma}\bigg(\sum_{\ell=0}^{|\bsnu|}\frac{1}{(|\bsnu|-\ell)!}\bigg)^{\sigma}\leq {\rm e}^{\sigma}(|\bsnu|!)^{\sigma},
\end{align*}
as desired.\quad\endproof

{\em Remark.} In the special case when $h(x)=x$, the above result holds for all $\bsnu\in\mathscr F$.

\bibliographystyle{plain}
\bibliography{gevrey_arXiv}

\end{document}